\documentclass[11pt]{amsart}

\usepackage{amssymb,graphicx, amsmath, amsthm, enumitem,MnSymbol, array,mathalpha}
\usepackage{calrsfs}
\usepackage{wrapfig}
\usepackage{graphicx}
\graphicspath{ {./images/} }

\DeclareMathAlphabet\mathbfcal{OMS}{cmsy}{b}{n}

\usepackage{todonotes}

\usepackage{floatflt}

\usepackage{hyperref}

\usepackage{mathbbol}

 \usepackage[mathscr]{eucal}

\usepackage[font=small]{caption}

\setlength{\topmargin}{0in}
\setlength{\textheight}{8.5in}
\setlength{\oddsidemargin}{.25in}
\setlength{\evensidemargin}{.25in}
\setlength{\textwidth}{6in}




\def\R{\mathbb R} 

\def\c{\mathscr}

\def\R{{\Rappa}}

\def\PP{\Phi}

\def\PP{\overline{\PP}}

\def\P{{\mathbb{P}}}

\def\C{{\bf C}}

\begin{document}

\newtheorem{theorem}{Theorem}[section]
\newtheorem{lemma}[theorem]{Lemma}
\newtheorem{proposition}[theorem]{Proposition}
\newtheorem{corollary}[theorem]{Corollary}
\newtheorem{problem}[theorem]{Problem}
\newtheorem{construction}[theorem]{Construction}

\theoremstyle{definition}
\newtheorem{defi}[theorem]{Definitions}
\newtheorem{definition}[theorem]{Definition}
\newtheorem{notation}[theorem]{Notation}
\newtheorem{remark}[theorem]{Remark}
\newtheorem{example}[theorem]{Example}
\newtheorem{question}[theorem]{Question}
\newtheorem{comment}[theorem]{Comment}
\newtheorem{comments}[theorem]{Comments}

\newtheorem{discussion}[theorem]{Discussion}

\renewcommand{\thedefi}{}

\long\def\alert#1{\smallskip{\hskip\parindent\vrule%
\vbox{\advance\hsize-2\parindent\hrule\smallskip\parindent.4\parindent%
\narrower\noindent#1\smallskip\hrule}\vrule\hfill}\smallskip}

\def\ff{\frak}
\def\tf{torsion-free}
\def\Spec{\mbox{\rm Spec }}
\def\Proj{\mbox{\rm Proj }}
\def\hgt{\mbox{\rm ht }}
\def\type{\mbox{ type}}
\def\Hom{\mbox{ Hom}}
\def\rank{\mbox{rank}}
\def\Ext{\mbox{ Ext}}
\def\Tor{\mbox{ Tor}}
\def\Rer{\mbox{ Ker }}
\def\Max{\mbox{\rm Max}}
\def\End{\mbox{\rm End}}
\def\xpd{\mbox{\rm xpd}}
\def\Ass{\mbox{\rm Ass}}
\def\emdim{\mbox{\rm emdim}}
\def\epd{\mbox{\rm epd}}
\def\repd{\mbox{\rm rpd}}
\def\ord{\mbox{\rm ord}}

\def\htt{\mbox{\rm ht}}

\def\DD{{\mathcal D}}
\def\EE{{\mathcal E}}
\def\FF{{\mathcal F}}
\def\GG{{\mathcal G}}
\def\HH{{\mathcal H}}
\def\II{{\mathcal I}}
\def\LL{{\mathcal L}}
\def\MM{{\mathcal M}}
\def\PP{{\mathcal P}}

\def\R{\mathbb{R}}

\title{Rectangles conformally inscribed in lines}

\author{Bruce Olberding} 
\address{Department of Mathematical Sciences, New Mexico State University, Las Cruces, NM 88003-8001}

\email{bruce@nmsu.edu}

\author{Elaine A.~Walker}
\address{1801 Imperial Ridge, Las Cruces, NM 88011}

\email{miselaineeous@yahoo.com}

\begin{abstract}   A parallelogram is conformally inscribed in  four lines in the plane if it is inscribed in a scaled copy of the configuration of four lines. We describe the geometry of the three-dimensional Euclidean space whose points are  the parallelograms  conformally inscribed in sequence in these four lines. In doing so, we describe the flow of inscribed rectangles by introducing a compact model of the rectangle inscription problem. 

 \end{abstract}

\subjclass{Primary 52C30, 51N10, 51N15 }

\thanks{\today}

\maketitle

\section{Introduction}

  This  article continues  recent work from \cite{OW,OW2,Sch,Sch2, Tup}, in   
   which the  geometric and algebraic properties of rectangles inscribed in lines in the plane are studied. 
  Describing such rectangles is the key step to describing rectangles inscribed in polygons, which in turn is related to the rectangle peg problem of finding rectangles inscribed in simple closed curves; see for example    \cite{GL, Kak, Mas, Mey, Sch2}. 
  To describe rectangles inscribed in lines, we need only consider four lines at a time. 
Denote by $\C$ a configuration consisting of two pairs of lines $A,C$ and $B,D$, where  not all four lines  are parallel. 
 A rectangle is {inscribed} in $\C$  if its vertices lie on $A,B,C,D$ in either clockwise or counterclockwise order.  
  Each  rectangle inscribed in $\C$ lies on a path of inscribed rectangles, either
 a ``slope path''  
   parametrized by  the slope of the rectangles or an ``aspect path'' parametrized by aspect ratio \cite{OW2,Sch}. (The images of these two paths can coincide, as they do in Figure~1.) 
   As they travel either path, the rectangles eventually grow without bound, yet as they do so their slopes and aspect 
 ratios   converge to a pair of numbers that do not occur as the slope or aspect ratio of any rectangle inscribed in $\C$. 
 
  \begin{figure}[h] \label{diagonalspar}
 \begin{center}
 \includegraphics[width=0.9\textwidth,scale=.09]{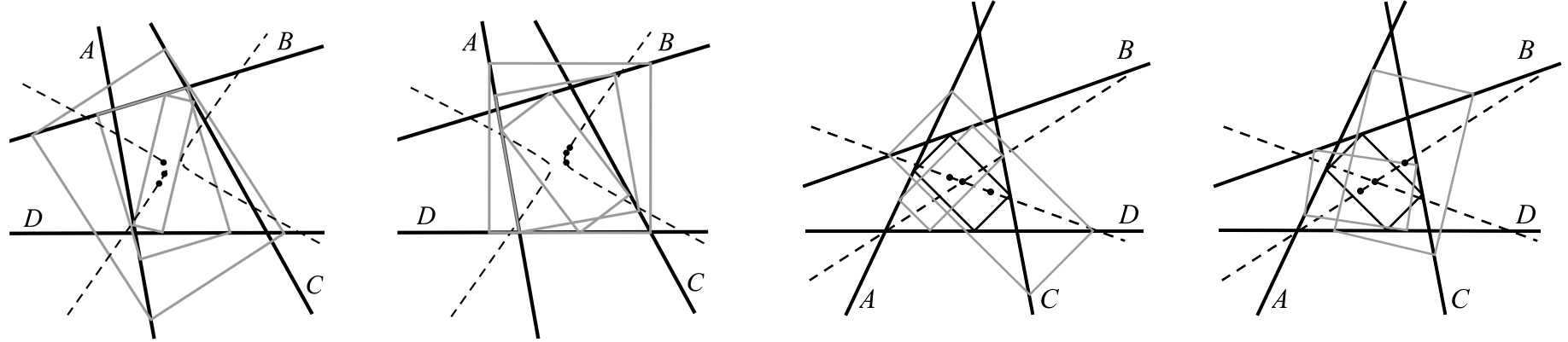} 
 \end{center}
 \caption{In the figures at left, the slope and aspect paths trace the same hyperbola and contain the same rectangles. (This is always the case if $\C$ is non-degenerate \cite[Theorem~8.1]{OW2}.)  The hyperbola  is the set of centers of the rectangles inscribed in $\C$.  The figures at right illustrate the case in which the hyperbola is degenerate. Here the slope path traces one line and the aspect path the other. 
   }
\end{figure}
 
 These two numbers, a missing slope and a missing aspect ratio, suggest a missing rectangle, a rectangle inscribed at infinity. In \cite[Section 3]{OW2}, this is dealt with by viewing the  inscribed rectangles as points in $\R^8$, with two coordinates for each vertex of the rectangle. Taking the projective closure  of the set of inscribed rectangles in $\R^8$ finds the missing rectangle as a point at infinity. The vertices of a rectangle can be read off from the  coordinates of this rectangle at infinity, but instead of a rectangle inscribed in the four lines, it is
an equivalence class of rectangles  
  inscribed in the configuration $\C$ viewed from infinity, in which all four lines go through the same point.  
 
It is proved in \cite[Theorem~3.2]{OW2} that when viewed  as points in $\R^8$, the rectangles inscribed in $\C$ constitute  a line or a   planar hyperbola in $\R^8$, which
if the rectangles at infinity are included
  becomes either a  simple closed  curve of rectangles  in the projective closure $\P^8$ of $\R^8$ or a pair of lines. This insertion of a missing rectangle at infinity is sufficient for the purposes in \cite{OW2}, but with this approach part of the geometry of the rectangle inscription problem breaks at infinity, since  at infinity, and only at infinity, the configuration switches from $\C$ to the configuration in which all four lines are translated to the origin. 
  Also, the 
   rectangle at infinity remains indefinite since 
 it is really an equivalence class of rectangles rather a single rectangle through which the other inscribed  rectangles pass. 
 
 These last two observations 
   are the starting point of this article and suggest the need for 
  a kind of curve selection lemma which  chooses from each equivalence  class in $\P^8$ a rectangle in such a way that the set of selected rectangles is a simple closed curve of rectangles inscribed not only in $\C$ but in scaled copies of $\C$ also. Viewing $\P^8$ as lines through the origin in $\R^9$, this can be accomplished using the fact that the 8-dimensional unit sphere is a double cover of $\P^8$. The desired simple closed curve of rectangles is then found on the unit sphere. 
   
   However, it's possible to describe the data that determines the rectangles much more efficiently than as points in $\R^8$ and  to give a less generic   explanation for the existence of a simple closed curve of rectangles that scale to the rectangles inscribed in $\C$.    To do so, in Section 2
      %
we define a notion of conformally inscribed parallelograms as those parallelograms that are inscribed in scaled copies of $\C$, and we show that the set of all such parallelograms inscribed in $\C$  forms a three-dimensional Euclidean space $\Pi$ with inner product determined by the diagonals of the parallelograms. 
This space has an orthonormal basis of three canonically chosen parallelograms that determine principal axes for $\Pi$. 
 This is helpful because the equations that govern the inscribed rectangles are long and unwieldy, 
 but they  become simple and easy to work with in the coordinates induced by this orthonormal basis. The previous complexity of the equations is hidden  in the coordinates for the parallelograms in the orthonormal basis.

Among the conformally inscribed rectangles are the {\it unit} rectangles, those conformally inscribed rectangles that have norm $1$ with respect to the inner product of $\Pi$. Every rectangle inscribed in the four lines can be represented by a unit rectangle inscribed in a scaled copy of the configuration of the four lines. We show that in the space $\Pi$ of conformally inscribed parallelograms, the unit rectangles lie on an intersection of two cylinders and thus form an algebraic space curve. This curve 
is the union of two simple closed analytic curves, and each of these curves gives a compact solution to the rectangle inscription problem in the sense that these rectangles comprise a flow that 
after projection accounts for all rectangles inscribed in $\C$ as well as those at infinity.


All this can be made more concrete by
 reinterpreting a model for the rectangle inscription problem from \cite{OW}, where it is shown that locating the {\it centers} of the rectangles inscribed in $\C$ is equivalent to describing the intersection of two hyperbolically rotated cones in $\R^3$. 
In the model, which we call in Section~7 the cone model for $\C$, the
intersection of the two cones is a space curve that projects orthographically  to a curve in $\R^2$ consisting of the rectangle centers. This model also omits the rectangles at infinity: the space curve heads to infinity as the centers of the rectangles in the plane head to infinity. 

However, as we discuss in Section~7, we may view the cone model as a chart in the three-dimensional projective space $\P^3$.  If we exchange the plane in the cone model in which the cone apexes reside with the plane at infinity, 
we bring the missing rectangles at infinity into view.  The hyperbolically rotated cones now have apex at infinity in this new model, which we call in Section~6 the cylinder model, and so  become elliptical cylinders. The intersection of these two cylinders is a compact space curve which consists of the centers of the unit rectangles.
 The parallelograms at infinity lie in a plane in the cylinder model, and so having the flow of inscribed rectangles pass through a rectangle at infinity is simply a matter  of having these rectangles pass through this plane. What makes this possible is a scaling dimension $w$ so that passing through the rectangles at infinity amounts to passing from positively scaled copies of $\C$ to negatively scaled copies.  It is this last feature, having another side of infinity, that is missing from the cone model for $\C$ and which makes possible the approach taken in this article. 
 
 We have use Maple$^{\rm TM}$ to assist with calculations and graphics.


\section{Conformally inscribed parallelograms}

When dealing with   parallelograms inscribed in  lines in the plane, we may  reduce to the case of four lines. If the four lines are not all parallel, 
we can relabel and rotate the lines in order to reduce further to the 
 following standing assumption for the article. (See  \cite{OW2,Sch2} for discussion of this reduction.) The restriction  that the line $B$ go through the point $(0,1)$ can always be achieved by scaling the configuration $\C$.

\begin{quote} {\bf Standing assumption}. 
Throughout this article we work with two pairs of lines $A,C$ and $B,D$  in the real plane $\R^2$ such that the four lines do not meet in a single point;
 $B$ goes through the point $(0,1)$;
and $C$ and $D$  meet in the origin and only in the origin. The configuration consisting of the two pairs of lines $A,C$ and $B,D$ is denoted $\C$.  
\end{quote}

We  work with equations for these four lines, and for this we use the following notation.   

\begin{notation}
We denote by $m_A,m_B,m_C,m_D,b_A $
the real numbers 
for which the equations defining the lines $A,B,C,D$ are 
$$A: \: y = m_Ax+b_A \:\:\: B: y=m_Bx+1 \:\:\:
C: y=m_Cx \:\:\: D: y=m_Dx.$$
 We write $m_{AB}$ for $m_A-m_B$, $m_{BC}$ for $m_B - m_C$, etc..
\end{notation}

   \begin{notation} \label{first notation}

Let $w\in \R$. For a line $L$ defined by an equation of the from $y = mx+b$, we denote by $L(w)$ the line in $\R^2$ defined by $y=mx+bw$. With $A,B,C,D$ the lines from the standing assumption, the line $A(w)$ is defined by $y=m_Ax+b_Aw$ and the line $B(w)$  by $y=m_Bx+w$. Since $C$ and $D$ pass through the origin,  $C(w) = C$ and $D(w) = D$.  We denote by $\C(w)$  the resulting  configuration consisting of the pairs $A(w),C(w)$ and $B(w),D(w)$. 
Thus for $w \ne 0$, $\C(w)$ is a scaled copy of $\C$. The special case where $w=0$ is the configuration $\C(0)$ of the four lines $A(0),B(0),C(0),D(0)$ through the origin.  This is the   configuration $\C$ viewed from infinity. 
\end{notation}


\begin{definition} 
A parallelogram $P$ is {\it inscribed in $\C$} if the vertices of $P$ lie in sequence, either clockwise or counterclockwise, on the lines $A,B,C,D$.  Among  inscribed parallelograms, we include the degenerate ones also,  those whose sides all lie on the same line, or even whose sides consist of a single point, namely the origin in the configuration $\C(0)$. 
A parallelogram $P$ in the plane $\R^2$ is {\it conformally inscribed} in $\C$ if  
$P$ can be inscribed in $\C(w)$ for some $w \in \R$. In this case, we say $w$ is the scale of $P$.  
 If $w =0$, then $P$ is {\it inscribed at infinity} for $\C$.  
We denote by 
 $\Pi$ the set of parallelograms conformally inscribed in $\C$.
\end{definition}





We give the set $\Pi$ of conformally inscribed parallelograms the structure of a  Euclidean space. 
Define the sum of two parallelograms to be the parallelogram that results from adding vertices coordinatewise.  
 If one of the parallelograms is inscribed in the configuration $\C(w_1)$ and the other in $\C(w_2)$, then the sum of the two parallelograms is inscribed in the configuration $\C(w_1+w_2)$.  Similarly, if $\lambda \in \R$, then multiplying the coordinates of each vertex of a parallelogram inscribed in $\C(w)$ by $\lambda$
results in a parallelogram inscribed in $\C(\lambda w)$. 
The degenerate parallelogram all of whose vertices are the origin is the additive identity for $\Pi$. 
 With these operations, $\Pi$ is a vector space. 

We define an inner product on $\Pi$ in terms of the diagonals of the parallelograms in $\Pi$. It is convenient to view these diagonals as vectors, as in the next definition. 

\begin{definition} 
\label{diagonal}
Let $P$ be a parallelogram in  $\Pi$, and let $w$ be the scale of $P$.  Denote by   $(x_L,y_L) \in L(w)$, $L \in \{A,B,C,D\}$, the vertices of $P$. 
 The $AC$  and $BD$ {\it diagonal vectors} of $P$ are, respectively, $$P_{AC} =(x_A-x_C,y_A-y_C),  \:\:
 P_{BD}=(x_B-x_D,y_B-y_D).$$
 The {\it side vectors} of $P$ are $P_{AB}=(x_A-x_B,y_A-y_B)$ and $P_{BC}=(x_B-x_C,y_B-y_C)$. 
 \end{definition}

\begin{definition} \label{inner def}
We define an inner product on $\Pi$ for each $P,Q \in \Pi$ by 
\begin{eqnarray*}
\left<P,Q\right> &= & 
P_{AC} \cdot Q_{AC} + P_{BD} \cdot Q_{BD} \\
&=& 2P_{AB} \cdot Q_{AB} + 2 P_{BC} \cdot Q_{BC},
\end{eqnarray*}
and we denote by $\| P\|$ the norm induced by this inner product, i.e. $\| P\|= \left<P,P\right>^{\frac{1}{2}}.$


\end{definition}

An argument similar to that for the parallelogram law shows that the second equality in the definition holds.  
Thus $\| P\|^2$ is the sum of the squared lengths of the  diagonals of $P$, which in turn is twice the sum of the squared lengths of the sides of $P$.
  That the bilinear map $\left<,\right>$ defines an inner product 
follows from the fact that if $\left<P,P\right> =0$, then 
$P_{AB} =P_{BC} = {\bf 0}$, which can only happen if 
%
$P$ is the degenerate parallelogram that is simply  the origin in $\R^2$.  Thus  $\left<P,P\right> =0$ if and only if $P$ is the zero element in $\Pi$.

\begin{theorem} \label{ortho} With the inner product in Definition~$\ref{inner def}$, 
 $\Pi$ is a Euclidean space  having 
 an orthonormal basis $U,V,T$ of $\Pi$ such that
 $U$ and $V$ are degenerate parallelograms and 
 $U_{AC} =V_{BD}={\bf 0}$ and $T_{AC} \cdot V_{AC} =
 T_{BD} \cdot U_{BD} =0.$ $($See Figure $2$.$)$
%
 The parallelograms  $U$ and $V$ are  at infinity if and only if
  at least one of the pairs  $A,C$ or $B,D$ consists of  parallel
lines.

  \end{theorem}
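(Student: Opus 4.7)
The plan is threefold: establish $\dim \Pi = 3$; construct $U$ and $V$ as degenerate conformally inscribed parallelograms with one diagonal vanishing apiece; and then extend to an orthonormal basis by choosing a unit vector $T$ in the one-dimensional orthogonal complement of $\mathrm{span}(U,V)$.

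For the dimension count, a conformally inscribed parallelogram $P$ is parameterized by its scale $w$ together with the $x$-coordinates of its four vertices on $A(w), B(w), C, D$, giving $5$ real parameters. The closure condition $A + C = B + D$ gives two scalar equations whose Jacobian has row vectors $(1, -1, 1, -1, 0)$ and $(m_A, -m_B, m_C, -m_D, b_A - 1)$. These rows are linearly dependent only if all four slopes coincide and $b_A = 1$, which is precluded by the standing assumption that the four lines are not all parallel. Hence $\dim \Pi = 3$.

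To construct $U$, I would impose $U_{AC} = \mathbf{0}$: the $A$- and $C$-vertices then coincide at a point of $A(w) \cap C$. When $m_A \neq m_C$ this intersection is a single point varying linearly with $w$, and the remaining vertices on $B(w), D$ are uniquely determined by the closure condition, giving a one-parameter family that I normalize to $\|U\| = 1$. When $m_A = m_C$, the lines $A(w)$ and $C$ meet only at $w = 0$ (where $A(0) = C$), so $U$ must be inscribed at infinity; the closure condition still produces a one-parameter family on the common line, from which $U$ is normalized. Either way $U$ is degenerate because the coinciding adjacent vertices collapse the parallelogram to a line segment. The construction of $V$ from $V_{BD} = \mathbf{0}$ is completely analogous.

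The inner product then yields $\langle U, V \rangle = U_{AC} \cdot V_{AC} + U_{BD} \cdot V_{BD} = 0$, so $U \perp V$; linear independence of $U, V$ follows from their complementary diagonal structures. Choosing $T$ as a unit vector in the one-dimensional orthogonal complement of $\mathrm{span}(U,V)$, the orthogonality conditions simplify via $V_{BD} = U_{AC} = \mathbf{0}$ to exactly $T_{AC} \cdot V_{AC} = T_{BD} \cdot U_{BD} = 0$. The infinity assertion is then immediate from the construction: $U$ has scale zero precisely when $m_A = m_C$, and $V$ has scale zero precisely when $m_B = m_D$, so at least one of $U, V$ is at infinity iff at least one of the pairs $A,C$ or $B,D$ is parallel. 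The main obstacle I anticipate is the case analysis when a pair of lines is parallel, where the scale is forced to zero and one must verify that the resulting one-parameter family of degenerate parallelograms at infinity is still nontrivial (so that unit normalization is possible) and that $U, V$ remain linearly independent; this will be handled by writing out vertex coordinates explicitly on the shared line and invoking the standing assumption to rule out total collapse.
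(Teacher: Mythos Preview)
Your overall architecture matches the paper's: compute $\dim\Pi=3$, build $U$ with $U_{AC}=\mathbf 0$ and $V$ with $V_{BD}=\mathbf 0$, observe they are orthogonal, and pick $T$ in the orthogonal complement. The dimension count via the rank of the linear constraints is fine (the paper instead writes explicit formulas for $x_C,x_D$ in terms of $x_A,x_B,w$).

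There is, however, a genuine gap in your construction of $U$ (and symmetrically $V$), and it propagates to a wrong infinity statement. You write that when $m_A\neq m_C$, the point $p=A(w)\cap C$ exists for any $w$ and ``the remaining vertices on $B(w),D$ are uniquely determined by the closure condition.'' That uniqueness claim fails exactly when $m_B=m_D$: the midpoint of a segment from $B(w)$ to $D$ then lies on the line equidistant from the two parallel lines, so the closure condition $\tfrac12(B\text{-vertex}+D\text{-vertex})=p$ has \emph{no} solution for generic $w$ (and a one-parameter family for the exceptional $w$). In particular you cannot take $w=1$; you are forced to $w=0$, and $U$ must be built at infinity even though $A$ and $C$ are not parallel. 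The paper handles this by a single combined case ``$A\parallel C$ or $B\parallel D$,'' giving explicit coordinates for $U$ with $w=0$ in that case.

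Because of this, your final sentence ``$U$ has scale zero precisely when $m_A=m_C$, and $V$ has scale zero precisely when $m_B=m_D$'' is incorrect, and the conclusion you draw (``at least one of $U,V$ is at infinity iff at least one pair is parallel'') is not the statement being proved. The theorem asserts that \emph{both} $U$ and $V$ are at infinity iff at least one pair is parallel; equivalently, $U$ is at infinity iff $V$ is at infinity. The paper's case split makes this immediate: in the ``neither pair parallel'' branch both are built at scale $1$, and in the ``some pair parallel'' branch both are built at scale $0$. Your split, keyed only to whether the \emph{same-letter} pair is parallel, cannot yield that equivalence. To repair the argument, enlarge your case analysis for $U$ to include $m_B=m_D$ (and symmetrically for $V$), and then the infinity characterization follows as in the paper.
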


  \begin{figure}[h] \label{Basic special}
 \begin{center}
 \includegraphics[width=0.65\textwidth,scale=.03]{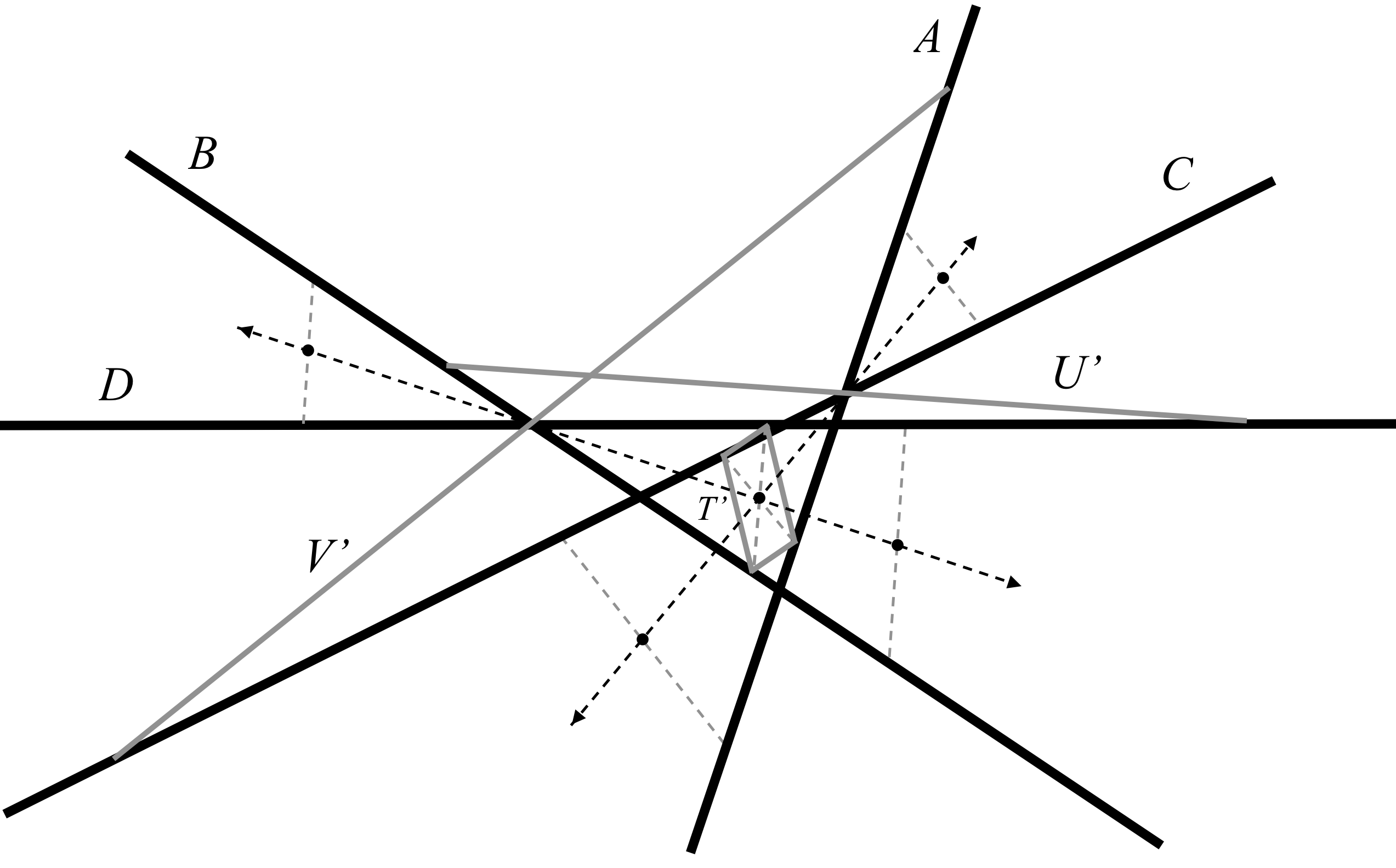} 
 \end{center}
 \caption{With $U,V,T$ as in Theorem~\ref{ortho}, 
 the  degenerate unit parallelogram $U$ scales to the degenerate parallelogram $U'$ (in gray). Similarly, $V$ scales to    $V'$ and $T$ scales to the non-degenerate parallelogram  $T'$. 
The center of $U'$ is the intersection of $A$ and $C$ and the center of $V'$ is the intersection of $B$ and $D$.   
 The center of $T'$ lies on the intersection of the two dotted lines in the figure, which are the lines $L_U$ and $L_V$ from Remark~\ref{LU}. }
 %
 \end{figure}

\begin{proof}
We first show $\dim(\Pi)=  3.$ 
Let $w \in \R$. By \cite[Lemma 4.3]{OW2}, a quadrilateral inscribed in $\C(w)$ with vertices $(x_L,y_L) \in L$, $L \in \{A(w),B(w),C(w),D(w)\}$, 
is a parallelogram 
 if and only if $x_D = x_A+x_B-x_C$ and 
 $x_C = 
 {(m_{DC})}^{-1}(m_{AD}x_A+ 
m_{DB}x_B+(b_{A}-1)w).  $
(By our standing assumptions, $m_C \ne m_D$, so this is a simple consequence of the fact that a quadrilateral is a parallelogram if and only if the midpoints of its two  diagonals  coincide.) 
Since a parallelogram is specified by its vertices, each choice of $x_A,x_B,w \in \R$ determines  a unique parallelogram in $\Pi$ according to these equations, and every parallelogram in $\Pi$ can be specified by the choice of $x_A,x_B,w$. Thus $\dim(\Pi) = 3$.

Next, observe that if $P$ and $Q$ are parallelograms in $\Pi$ such that $P_{AC}$ and $Q_{BD}$ are the zero vector, then $P$ and $Q$ are orthogonal: 
$\left<P,Q\right>^2=P_{AC}\cdot Q_{AC} + P_{BD}\cdot Q_{BD} =0$.
Thus, if $U$ and $V$ are parallelograms such that the $AC$ diagonal of $U$ and the  $BD$ diagonal of  $V$ have length $0$, then $U$ and $V$ are orthogonal. 

We construct the parallelogram $U$ first.  
If neither pair $A,C$ or $B,D$ consists of parallel lines, set
 $$x_A = -\frac{b_A}{m_{AC}}, \: \: x_B= -\frac{2b_Am_{CD}+m_{AC}}{m_{AC}m_{BD}}, \:\:w=1$$
Then, using the expressions above involving $x_C$ and $x_D$, 
these choices for $x_A,x_B,w$ define a parallelogram $P$ in $\Pi$ with vertices $(x_L,y_L) \in L$, for $L \in \{A,B,C,D\}$. Also, a calculation shows 
 $x_A
=x_C$ and $y_A=y_C$, so that $P$ defines a parallelogram in $\Pi$ whose $AC$ diagonal vector is ${\bf 0}$. Since $B$ does not go through the origin, the $BD$ diagonal  vector of $P$ is not ${\bf 0}$. Multiplying $P$ by $\|P\|^{-1}$, we obtain a parallelogram $U$ in $\Pi$ with norm 1 whose $AC$ diagonal has length $0$.

On the other hand, if $B$ and $D$ are parallel or $A$ and $C$ are parallel, 
%
then with
$$x_A = \frac{m_{BD}}{2m_{CD}}, \:\: x_B=1, \:\: w=0,$$ we obtain a parallelogram with $x_A=x_C$ and $ y_A=y_C$, and this parallelogram can be rescaled  to a parallelogram $U$ that has norm $1$. 

Next, to define $V$, assume first that 
 neither pair $A,C$ or $B,D$ is parallel. Set
$$ x_A = -\frac{b_Am_{BD}-2m_{CD}}{m_{BD}m_{AC}}, \:\: x_B = -\frac{1}{m_{BD}}, \:\:w=1.$$ As above, this defines a parallelogram $P$ with vertices $(x_L,y_L) \in L$,  $L \in \{A,B,C,D\}$. A calculation shows $x_B=x_D$ and $y_B=y_D$. Since the lines $A,B,C,D$ do not all go through the origin, it cannot be that $x_A=x_C$ and $y_A=y_C$. Thus $\|P\| >0$, and as above, rescaling gives a parallelogram $V$  with norm 1 whose $BD$ diagonal has length $0$. 

On the other hand,  if $m_A = m_C$, then  the values $x_A=1,x_B=0,w=0$ define a parallelogram that after rescaling gives the parallelogram $V$.  If instead $m_A \ne m_C$ and $m_B = m_D$, then 
$$x_A =- \frac{2m_{CD}}{m_{AC}}, \:\: x_B = 1, \:\: w=0$$ 
defines a parallelogram that after rescaling gives $V$. 

 The construction of $U$ and $V$  shows that one of the parallelograms $U,V$ is at infinity (i.e., has scale $w=0$) if and only if both are; if and only if at least one of the pairs  $A,C$ and $B,D$ consists of parallel lines. Also, since $U_{AC} ={\bf 0}$ and $V_{BD}={\bf 0}$, the parallelograms $U$ and $V$ are orthogonal. 


Finally, since $\Pi$ has dimension  $3$, the orthogonal complement of the subspace of $\Pi$ spanned by $U$ and $V$ has dimension $1$. Choose a parallelogram $T$ in this subspace for which $\|T\| = 1$.  Since $U_{AC}$ is the zero vector and $T$ is orthogonal to $U$, it follows that $T_{BD} \cdot U_{BD} =0$. Similarly, $T_{AC}\cdot V_{AC}=0$. 
\end{proof}



\begin{remark} \label{LU}
Suppose neither pair $A,C$ or $B,D$ consists of parallel lines. The construction of the parallelogram $T$ in the proof of Theorem~\ref{ortho} leads to the following method for finding~$T$. 
Let $L_U$ be the median curve for the lines $A$ and $C$ along a line perpendicular to the degenerate parallelogram $U$; that is, let $L_U$ be
 the line consisting of the midpoints of the line segments that join $A$ and $C$ and are perpendicular to the degenerate parallelogram $U$. Similarly,  let $L_{V}$ be the line   of midpoints of the  segments perpendicular to  $V$ that join $B$ and $D$. If $L_{U}$ and $ L_{V}$ intersect at a point then this point is the center of the copy of $T$ scaled to a parallelogram inscribed in $\C$. This is illustrated in Figure~2. 
Otherwise, if $L_U$ and $L_V$ are parallel (see Figure~3), then $T$ is at infinity. 
\end{remark}

In light of the theorem, we  introduce coordinates for the Euclidean space $\Pi$ in terms of the basis $U,V,T$.  

\begin{notation} For $u,v,t \in \R$, we denote by $(u,v,t)$ the parallelogram $uU+vV+tT$ in $\Pi$. We also denote by $w_U,w_V,w_T$ the scales of the parallelograms $U,V,T$. 
\end{notation}

Since $U,V,T$ is an orthonormal basis, for each $u,v,t\in \R$, we have 
 $$\|uU+vV+tT\|=\sqrt{u^2+v^2+t^2}.$$  Thus in the coordinates $(u,v,t)$, the set of {\it unit parallelograms} in $\Pi$,  i.e., those with norm $1$, is a sphere defined by $u^2+v^2+t^2=1$.  
 
   By Theorem~\ref{ortho},  one of the parallelograms $U,V$ is at infinity if and only if both are; if and only if at least one of the pairs  $A,C$ and $B,D$ consists of parallel lines. 
 In this case, the parallelogram $T$ is not at infinity since otherwise the linearly independent vectors $U,V,T$ all lie in the two-dimensional subspace of $\Pi$ consisting of the parallelograms at infinity, a contradiction. 
 On the other hand, if $U$ and $V$ are not at infinity (and so neither pair of lines $A,C$ or $B,D$ consists of parallel lines), then the orthogonal complements $U^\perp$ and $V^\perp$ of $U$ and $V$ are distinct planes through the origin. These two planes meet in a line which contains $T$. This line, and hence $T$, can be at infinity or not. See Figure~3 for an example where $T$ is at infinity.

\begin{figure}[h] \label{Basic special}
 \begin{center}
 \includegraphics[width=0.65\textwidth,scale=.03]{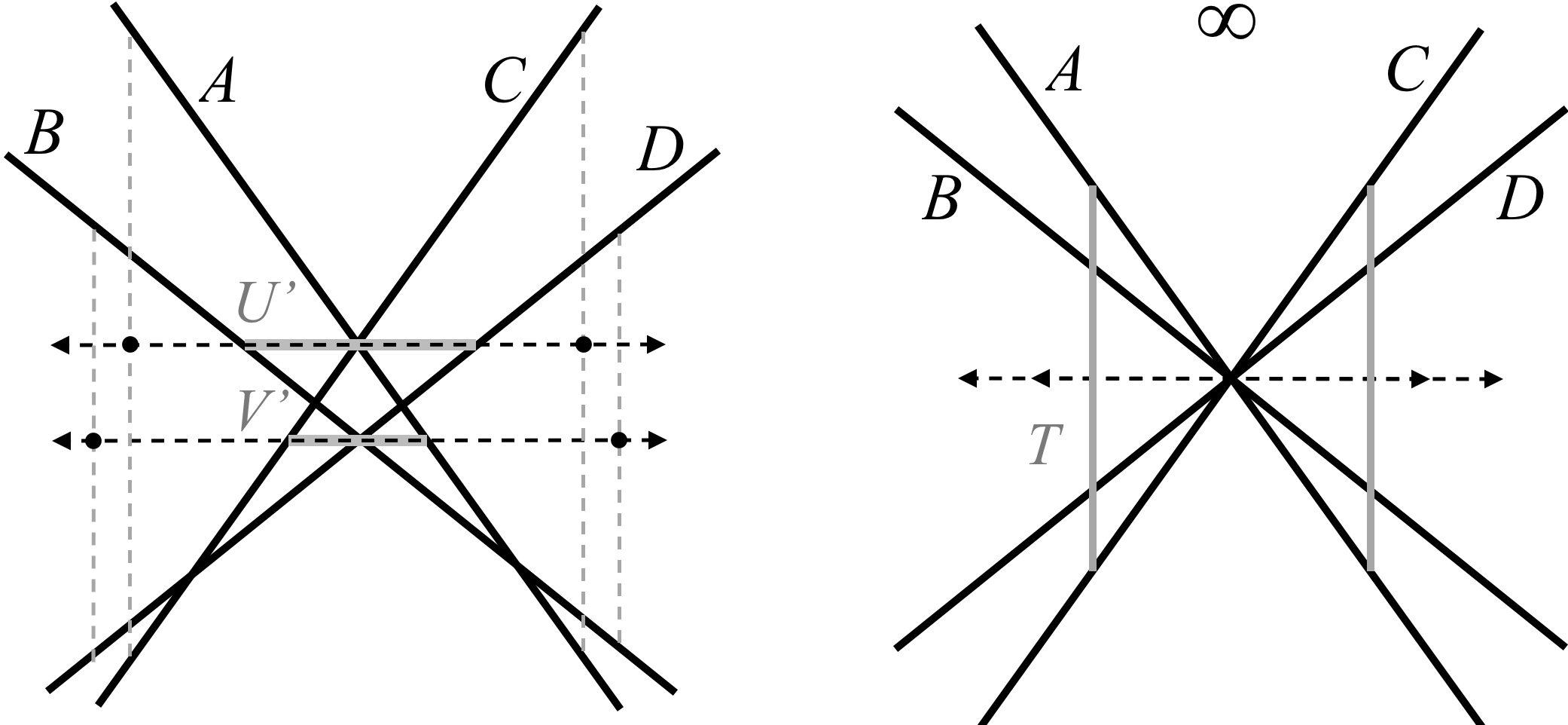} 
 \end{center}
 \caption{The figure at right is the configuration at left viewed from infinity.  With $U,V,T$ as in Theorem~\ref{ortho}, 
  $T$ is degenerate and at infinity. The degenerate parallelograms $U$ and $V$ scale to 
the degenerate parallelograms $U'$ and $V'$ (in gray), respectively. The dotted arrows are the lines $L_U$ and $L_V$ from Remark~\ref{LU}. The fact that they are parallel is why $T$ is at infinity.}
 \end{figure}

 The parallelograms  inscribed in $\C$  can be distinguished among the parallelograms conformally inscribed in $\C$, according to  the next corollary.  Recall that $w_U,w_V,w_T$ are the scales of $U,V,T$, respectively.

\begin{corollary} \label{w cor} For each $w \in \R$, the set of parallelograms $(u,v,t)$ inscribed in $\C(w)$ is  a plane in $\Pi$ defined by $w = uw_U+vw_V+tw_T$. 
This plane goes through the origin if and only if $w =0$, and so the parallelograms at infinity form a two-dimensional subspace of $\Pi$. 
\end{corollary}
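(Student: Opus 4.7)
The plan is to view the scale $w$ as a linear functional on the Euclidean space $\Pi$ and then read off the equation of each $\C(w)$-plane from the basis $U,V,T$.

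First I would check that scale is a well-defined function $\sigma:\Pi \to \R$. Given $P \in \Pi$ with vertex $(x_B,y_B)$ on the line $B(w):y=m_Bx+w$, we have $w = y_B - m_Bx_B$, so $w$ depends only on $P$ (even in the degenerate case of the zero parallelogram, which forces $w=0$). The two observations already made in Section~2 in the definition of the vector space structure on $\Pi$—that the sum of a parallelogram inscribed in $\C(w_1)$ and one inscribed in $\C(w_2)$ is inscribed in $\C(w_1+w_2)$, and that $\lambda$ times a parallelogram inscribed in $\C(w)$ is inscribed in $\C(\lambda w)$—say precisely that $\sigma$ is $\R$-linear.

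Next, since $\sigma(U)=w_U$, $\sigma(V)=w_V$, $\sigma(T)=w_T$, linearity gives $\sigma(uU+vV+tT)=uw_U+vw_V+tw_T$. So for any $w \in \R$, the set of $(u,v,t)$ inscribed in $\C(w)$ is exactly the level set $\sigma^{-1}(w)$, defined by $uw_U+vw_V+tw_T = w$.

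The only thing to check to conclude that each such level set is a genuine plane (and not all of $\Pi$ or empty) is that $\sigma$ is not identically zero. If $\sigma \equiv 0$, then in particular $w_U=w_V=w_T=0$, which would place the linearly independent parallelograms $U,V,T$ inside the subspace of parallelograms at infinity. But that subspace cannot be all of $\Pi$: e.g., any ordinary (rectangle) inscription in $\C$ yields a parallelogram of scale $1$, so the scale functional $\sigma$ takes a nonzero value somewhere. (Alternatively, one can invoke the construction of $U$ and $V$ in the proof of Theorem~\ref{ortho}, which shows $T$ cannot be at infinity when $U,V$ are, while $w_U=w_V=1$ otherwise.) Hence $\sigma$ is a nonzero linear functional, so it is surjective and each fiber $\sigma^{-1}(w)$ is an affine plane. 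This plane contains the origin iff $w=\sigma(0)=0$, and in that case it is the two-dimensional kernel of $\sigma$, which is exactly the set of parallelograms inscribed in $\C(0)$, i.e.~the parallelograms at infinity.

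No real obstacle is expected; the only subtle point is verifying that scale is well-defined and linear, after which the result is just the description of a level set of a nonzero linear functional on a three-dimensional space.
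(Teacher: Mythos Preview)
Your argument is correct and follows the same route as the paper: treat the scale as a linear functional on $\Pi$, evaluate it on the basis $U,V,T$ to obtain $w=uw_U+vw_V+tw_T$, and read off the level sets. The paper's proof is terser and does not explicitly verify that $(w_U,w_V,w_T)\ne(0,0,0)$; your check that $\sigma$ is nonzero (via the existence of a parallelogram of scale~$1$) is a welcome addition that the paper handles only implicitly through the surrounding discussion.
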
 

\begin{proof} 
For each $w \in \R$, the parallelograms inscribed in $\C(w)$ are the parallelograms $(u,v,t)$ that have scale $w$. If $P$ and $Q$ are two parallelograms in $\Pi$ of scales $w_P$ and $w_Q$, then the scale of $P+Q$ is $w_P+w_Q$ since $P \in \C(w_P)$ and $Q \in \C(w_Q)$ imply $P+Q \in \C(w_P+w_Q)$.  
Thus the scale $w$ of the parallelogram $(u,v,t)$ is 
 $w = uw_U+vw_V+tw_T.$ 
The plane in $\Pi$ defined by this equation goes 
  through the origin if and only if $w =0$. The parallelograms of scale $0$ are precisely the   parallelograms at infinity, so the corollary follows.
\end{proof}

Since $\Pi$ is a vector space,  we may consider its projective space $\P \Pi$  whose points are the lines in $\Pi$ through the origin.  The next corollary is a straightforward consequence of the fact from Theorem~\ref{ortho} that $\Pi$ is a three-dimensional vector space. 

\begin{corollary} \label{proj}
The projective space $\P \Pi$ of $\Pi$ is the projective closure of the plane of parallelograms in $\Pi$ that are inscribed in $\C$. 
Each point in  $\P \Pi$ is  the set of all the scaled copies of a parallelogram inscribed in $\C$ or a parallelogram at infinity for $\C$. \qed
\end{corollary}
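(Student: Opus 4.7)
The plan is to exploit the fact that by Theorem~\ref{ortho} the space $\Pi$ is three-dimensional, so $\P\Pi$ is a projective plane, and then to identify the affine plane of inscribed parallelograms together with its line at infinity inside this projective plane using Corollary~\ref{w cor}.

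First I would set $\Pi_1 := \{(u,v,t)\in \Pi : uw_U+vw_V+tw_T = 1\}$ and $\Pi_0 := \{(u,v,t)\in \Pi : uw_U+vw_V+tw_T = 0\}$. By Corollary~\ref{w cor}, $\Pi_1$ is the affine plane of parallelograms inscribed in $\C$, while $\Pi_0$ is the two-dimensional linear subspace consisting of the parallelograms at infinity. In particular $\Pi_0$ is parallel to $\Pi_1$. Next I would observe that any line $\ell$ through the origin in $\Pi$ falls into exactly one of two cases. If $\ell \not\subseteq \Pi_0$, then any nonzero $P\in \ell$ has scale $w_P\ne 0$, and $w_P^{-1}P$ is the unique point of $\ell \cap \Pi_1$; every other point of $\ell$ is a scalar multiple of this inscribed parallelogram, so $\ell$ is exactly the set of scaled copies of a single parallelogram inscribed in $\C$. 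If instead $\ell \subseteq \Pi_0$, then every point on $\ell$ has scale $0$, and $\ell$ is the set of scaled copies of a single parallelogram at infinity.

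Finally, I would assemble these observations to identify $\P\Pi$ with the projective closure of $\Pi_1$: the lines through the origin meeting $\Pi_1$ in a point correspond to the affine points of $\Pi_1$, while the lines through the origin contained in $\Pi_0$ correspond to the line at infinity added in the closure. Since every line through the origin belongs to one of these two types, every point of $\P\Pi$ arises in this way, and the dichotomy established in the previous paragraph gives the second assertion of the corollary. There is no serious obstacle here; the only thing to be careful about is the clean case split according to whether a line through the origin lies in the subspace $\Pi_0$ or not, which is immediate from Corollary~\ref{w cor}.
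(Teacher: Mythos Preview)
Your proposal is correct and follows exactly the approach the paper intends: the corollary is stated with only a \qed, having been introduced as ``a straightforward consequence of the fact from Theorem~\ref{ortho} that $\Pi$ is a three-dimensional vector space.'' Your argument simply spells out this straightforward consequence, using Corollary~\ref{w cor} to identify the affine plane $\Pi_1$ and the linear subspace $\Pi_0$, and then carrying out the standard identification of $\P\Pi$ with the projective closure of an affine plane not through the origin.
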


The next theorem shows that the inner product on $\Pi$ is a natural choice, being induced as it by the usual dot product on $\R^4$. This dot product involves the diagonal vectors of the parallelogram, which when extracted 
 from a parallelogram $P$ in $\Pi$ 
results in the loss of the location of the parallelograms since only the shape and orientation is preserved by the diagonal vectors.  Thus, on their own, the diagonal vectors
 determine $P$ only up to translation.
 However, with the data  of the original configuration  $\C$ in which $P$ was conformally  inscribed, we can recover the location of $P$. In particular, there is only one parallelogram in $\Pi$ that 
has a given set of diagonal vectors. Similarly, there is only one parallelogram in $\Pi$ that has a given set of  side vectors. 

 \begin{theorem} \label{L} The linear transformation that sends a parallelogram $P$ in $\Pi$ to its diagonal vectors $(P_{AC},P_{BD})$ in $\R^2 \times \R^2$ is an isometry onto its image. Thus
the diagonal vectors of  $P $ uniquely determine $P$, as do the side vectors.  
\end{theorem}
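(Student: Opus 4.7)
The plan is to verify that the map $L \colon \Pi \to \R^2 \times \R^2$ defined by $L(P) = (P_{AC},P_{BD})$ is linear and preserves inner products, then read injectivity off the non-degeneracy of the inner product on $\Pi$. The essential observation is that the definition of $\langle \cdot , \cdot \rangle$ on $\Pi$ was arranged precisely so that $L$ pulls back the standard dot product on $\R^2 \times \R^2$; so very little is at stake beyond careful bookkeeping.

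First I would verify that $L$ is linear. If $P,Q\in\Pi$ have vertices $(x_L,y_L)\in L(w_P)$ and $(x'_L,y'_L)\in L(w_Q)$ respectively, then by the coordinatewise definition of the vector space structure on $\Pi$ (given just before Definition~\ref{diagonal}), for any $\lambda \in \R$ the parallelogram $\lambda P+Q$ has vertices $(\lambda x_L+x'_L,\lambda y_L+y'_L) \in L(\lambda w_P+w_Q)$. Subtracting the vertices on $A$ and $C$ and then on $B$ and $D$ gives $(\lambda P + Q)_{AC} = \lambda P_{AC} + Q_{AC}$ and $(\lambda P + Q)_{BD} = \lambda P_{BD} + Q_{BD}$, so $L(\lambda P + Q) = \lambda L(P) + L(Q)$.

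Next, by the first equality in Definition~\ref{inner def} we have $\langle P,Q\rangle = P_{AC}\cdot Q_{AC} + P_{BD}\cdot Q_{BD} = L(P)\cdot L(Q)$, where the right-hand dot product is the standard one on $\R^2\times \R^2$. Thus $L$ is an isometry onto its image. For injectivity, if $L(P)= \mathbf{0}$ then $\langle P,P\rangle = 0$, and by the discussion following Definition~\ref{inner def} this forces $P$ to be the zero element of $\Pi$. In particular, a parallelogram in $\Pi$ is uniquely determined by its diagonal vectors.

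Finally, for the side-vector assertion, the same coordinatewise argument shows the map $S(P) = (P_{AB},P_{BC})$ is linear, and the second equality in Definition~\ref{inner def} gives $\|P\|^2 = 2\|P_{AB}\|^2 + 2\|P_{BC}\|^2$. Hence $S(P) = \mathbf{0}$ forces $\|P\| = 0$, i.e.\ $P$ is the zero parallelogram, so the side vectors determine $P$ as well. There is no substantial obstacle here: the content of the theorem is really that the inner product on $\Pi$ was defined so as to make both maps tautologically isometric (up to the factor of $2$ in the side-vector case).
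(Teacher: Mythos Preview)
Your proof is correct and, in fact, slightly more streamlined than the paper's. Both arguments rest on the same observation---that the inner product on $\Pi$ was defined precisely so that $P\mapsto (P_{AC},P_{BD})$ pulls back the dot product on $\R^2\times\R^2$---but the paper routes the injectivity argument through the explicit orthonormal basis $U,V,T$ of Theorem~\ref{ortho}: it writes the map as a $4\times 3$ matrix in those coordinates and observes that orthonormality forces rank~$3$. Your argument bypasses the basis entirely, reading injectivity directly off the positive-definiteness of $\langle\cdot,\cdot\rangle$. For the side-vector claim, the paper exhibits the invertible linear relation between $(P_{AB},P_{BC})$ and $(P_{AC},P_{BD})$ and composes with the already-established injectivity, whereas you appeal to the second equality in Definition~\ref{inner def} to conclude $S(P)=\mathbf{0}\Rightarrow\|P\|=0$ directly. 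Both are valid; your route is marginally more elementary in that it does not invoke Theorem~\ref{ortho}.
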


\begin{proof} Let $M$ be the $4 \times 3$ matrix 
$$\begin{bmatrix} U_{AC} & V_{AC} & T_{AC} \\
U_{BD} & V_{BD} & T_{BD} \\
\end{bmatrix}$$
Since $U,V,T$ is an orthonormal basis, this matrix has rank~$3$. If $P$ is a parallelogram with coordinates $(u,v,t)$ and vertices $(x_L,y_L) \in L$, for $L \in \{A(w),B(w),C(w),D(w)\}$, then  since $P = uU+vV+tT$, we have 
$$M \begin{bmatrix} 
u \\
v \\
t\\
\end{bmatrix} = \begin{bmatrix} P_{AC} \\
P_{BD} \\
\end{bmatrix}.$$
Since $M$ has rank $3$, $(u,v,t)$ is the unique solution to this matrix equation. The matrix $M$ is the representation with respect to the basis $U,V,T$ of the linear transformation that sends a parallelogram to its diagonal vectors, and so this transformation is injective. The transformation is an isometry onto its image since   $\|P\|^2= P_{AC}\cdot P_{AC} + P_{BD} \cdot P_{BD}$ and the latter expression is an application of the standard norm for $\R^4$.  

To prove the statement about side vectors, observe that $P_{AC}=P_{AB}+P_{BC}$ and $P_{BD} = P_{BC}+P_{CD}= P_{BC}-P_{AB}$.  Thus, with $I$ the $2 \times 2$ identity matrix, we have 
$$
\begin{bmatrix} P_{AC} \\
P_{BD} \\
\end{bmatrix} =
\begin{bmatrix}  I & I \\
I & -I \\
\end{bmatrix} 
\begin{bmatrix} P_{BC} \\
P_{AB} \\
\end{bmatrix}.$$
This gives  an invertible linear transformation from the space of diagonal vectors to the space of side vectors, and since the representation of $P$ by its diagonal vectors is unique, so is the representation of $P$ by its side vectors. 
\end{proof}



\section{The geometry of the space of parallelograms}

In this section we describe the   geometry of the set of rectangles in the Euclidean space~$\Pi$. Since $\Pi$ contains not only the rectangles inscribed in $\C$ but also those conformally inscribed in $\C$, there is flexibility in choosing rectangles to represent, up to scale, the rectangles inscribed in $\C$. A natural way to do this is to select the {\it unit rectangles}, those rectangles having norm $1$ in $\Pi$, and we do this in this section.  

The parallelograms $U,V,T$ from Theorem~\ref{ortho}, along with the coordinates $(u,v,t)$ that represent the parallelograms in $\Pi$,  play a main role  in our approach in this section. The three lines in $\Pi$ that pass through the origin and each of these three parallelograms form orthogonal axes for $\Pi$ and make the geometric properties of the set of unit rectangles a straightforward matter. For example, these axes and the invariants of $T$ introduced in the following notation are all that are needed to specify the unit rectangles, as we show in Theorem~\ref{rect cor}. 

\begin{notation} \label{lambda} With $T$ the parallelogram given by Theorem~\ref{ortho}, 
 let 
 $\lambda$ be the squared length of the $AC$ diagonal of $T$, and $\mu$ be the squared length of the $BD$ diagonal; that is, \begin{center} $\lambda = T_{AC} \cdot T_{AC} $ \: and \:  $\mu =T_{BD}\cdot T_{BD} $. \end{center} Since $\|T\|=1$, we have $\lambda+\mu=1$.  

\end{notation}

 It is possible that $\lambda=0$ or $\mu = 0$.  For example, $\lambda =0$  if $A=C$ or $A$ and $C$ are parallel and the intersection of $B$ and $D$ is equidistant from lines $A$ and $C$. See Figure 4 for the latter example.

\begin{figure}[h]  
 \begin{center}
 \includegraphics[width=0.7\textwidth,scale=.03]{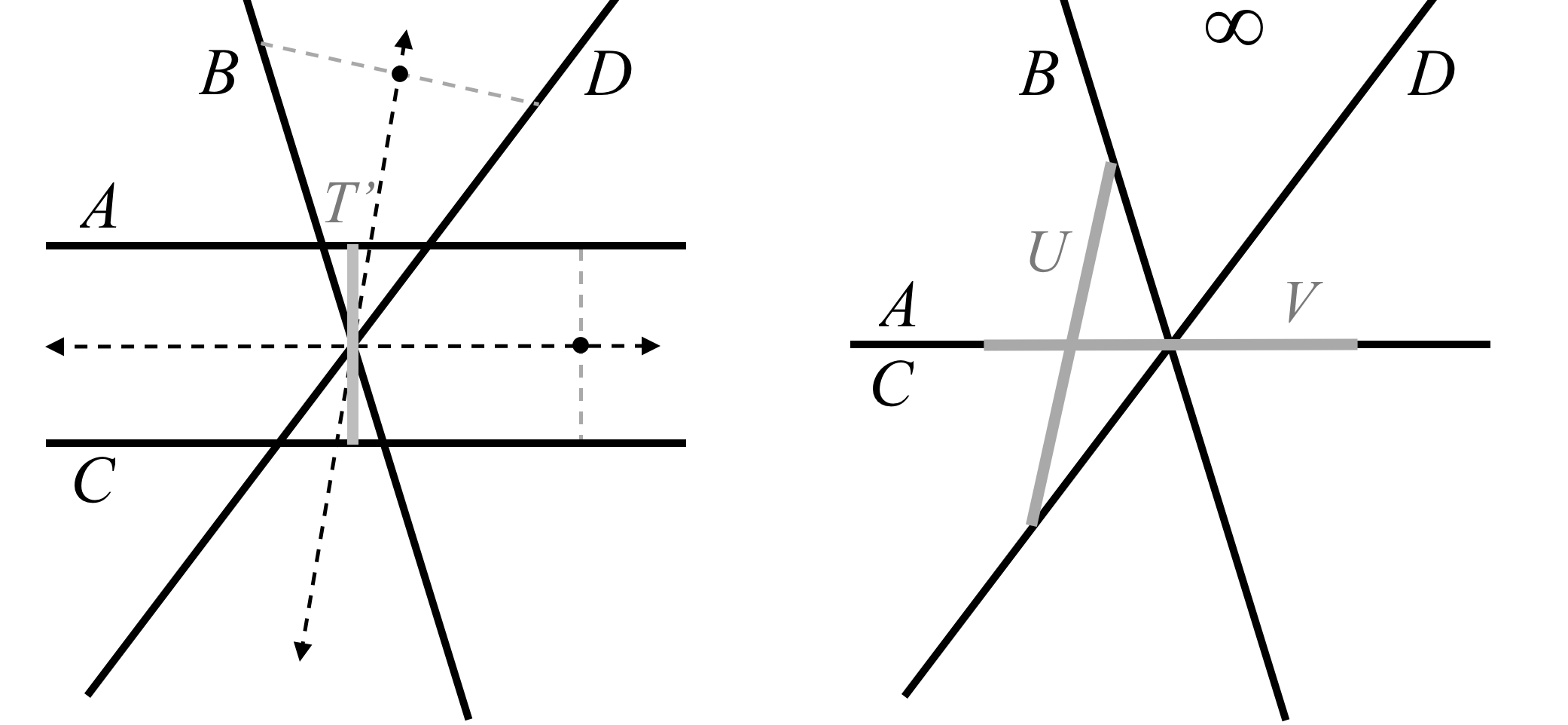} 
 \end{center}
 \caption{Since $A$ and $C$ are parallel in the configuration ${\bf C}$ at left, the parallelograms $U$ and $V$ from Theorem~\ref{ortho} are at infinity, which is represented at right. 
  The parallelogram $T$ is also degenerate and it scales to the inscribed parallelogram $T'$ whose  center lies on the two lines $L_U$ and $L_V$ (shown with dashes) from Remark~\ref{LU}. 
 In this example, $\lambda =0$ since the $AC$ diagonal of $T$ has length~$0$.}
 \end{figure}

\begin{lemma} \label{cylinders} In the coordinates $(u,v,t)$, the  set of unit rectangles in $\Pi$ is an algebraic curve given by the intersection of the surfaces defined by
\begin{center} $2u^2+2\mu t^2=1$ \: and \: $2v^2+2\lambda t^2=1.$
\end{center}
Each of these surfaces is  an elliptical cylinder or pair of parallel planes $($see Figure~$5)$. The first surface
 is the set of parallelograms in $\Pi$ whose $BD$ diagonal has length $1/2$ and the second  is the set of parallelograms whose $AC$ diagonal has length $1/2$. 
\end{lemma}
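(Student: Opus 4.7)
The plan is to reduce the rectangle condition to a pair of squared-length equations for the two diagonal vectors, and then to expand those squared lengths in the orthonormal basis $U,V,T$ using the vanishing relations $U_{AC}={\bf 0}$, $V_{BD}={\bf 0}$, $T_{AC}\cdot V_{AC}=0$, $T_{BD}\cdot U_{BD}=0$ given by Theorem~\ref{ortho}.

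First I would characterize rectangles inside $\Pi$: a parallelogram is a rectangle if and only if its two diagonals have the same length, so $P$ is a rectangle exactly when $P_{AC}\cdot P_{AC}=P_{BD}\cdot P_{BD}$. Combined with the identity $\|P\|^2=P_{AC}\cdot P_{AC}+P_{BD}\cdot P_{BD}$ from Definition~\ref{inner def}, the unit rectangle condition $\|P\|=1$ splits into the pair
\begin{equation*}
P_{AC}\cdot P_{AC}=\tfrac{1}{2},\qquad P_{BD}\cdot P_{BD}=\tfrac{1}{2}.
\end{equation*}
So the set of unit rectangles is the intersection of the two level surfaces cut out by these two equations in $\Pi$, and the final assertion of the lemma (that the two surfaces are exactly the parallelograms whose $AC$, respectively $BD$, diagonal has the prescribed squared length) is built in to the setup.

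Next I would expand the two quadratic forms in the coordinates $(u,v,t)$. Writing $P=uU+vV+tT$ and using that the diagonal-vector map is linear, we have $P_{AC}=vV_{AC}+tT_{AC}$ (because $U_{AC}={\bf 0}$) and $P_{BD}=uU_{BD}+tT_{BD}$ (because $V_{BD}={\bf 0}$). The orthogonality relations $V_{AC}\cdot T_{AC}=0$ and $U_{BD}\cdot T_{BD}=0$ kill the cross terms, leaving
\begin{equation*}
P_{AC}\cdot P_{AC}=v^2\,|V_{AC}|^2+t^2\lambda,\qquad P_{BD}\cdot P_{BD}=u^2\,|U_{BD}|^2+t^2\mu.
\end{equation*}
Since $\|U\|=1$ and $U_{AC}={\bf 0}$ force $|U_{BD}|^2=1$, and symmetrically $|V_{AC}|^2=1$, setting each expression equal to $1/2$ yields the two equations $2u^2+2\mu t^2=1$ and $2v^2+2\lambda t^2=1$ of the statement.

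Finally, I would analyze each surface by inspection: the equation $2u^2+2\mu t^2=1$ is independent of $v$, so if $\mu>0$ it cuts out an elliptical cylinder with axis parallel to the $v$-axis, while if $\mu=0$ it degenerates to the two parallel planes $u=\pm 1/\sqrt 2$; the analysis of $2v^2+2\lambda t^2=1$ is identical with the roles of $u,v$ and $\lambda,\mu$ swapped. The only mild subtlety is justifying $|U_{BD}|^2=|V_{AC}|^2=1$, but this is immediate from $\|U\|^2=|U_{AC}|^2+|U_{BD}|^2$ together with $U_{AC}={\bf 0}$, and similarly for $V$; so I do not expect any real obstacle beyond carefully tracking which cross terms vanish.
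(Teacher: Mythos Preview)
Your argument is correct and follows essentially the same approach as the paper: both expand $P_{AC}\cdot P_{AC}$ and $P_{BD}\cdot P_{BD}$ in the $(u,v,t)$-coordinates using $U_{AC}=V_{BD}={\bf 0}$, $V_{AC}\cdot T_{AC}=U_{BD}\cdot T_{BD}=0$, and $|V_{AC}|^2=|U_{BD}|^2=1$, then observe that a unit rectangle is exactly a parallelogram whose two diagonals each have squared length $1/2$. The only cosmetic difference is that you state the rectangle characterization up front while the paper derives the coordinate formulas first and then invokes it; the content is the same.
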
 

\begin{proof} 
First we show that for a parallelogram $P$ in $\Pi$ with coordinates $(u,v,t)$,
\begin{center}  $P_{AC} \cdot P_{AC} = v^2+\lambda t^2$ \: and \:  $P_{BD} \cdot P_{BD} = {u^2+\mu t^2}$.\end{center}
Since  $V_{BD}=\bf 0$ and $\|V\|=1$, we have $V_{AC} \cdot V_{AC}=1$.  
Therefore, since $U_{AC}={\bf 0}$  and  $P = uU+vV+tT$,  Theorem~\ref{ortho} implies
\begin{eqnarray*}
P_{AC} \cdot P_{AC} &= &  v^2(V_{AC}\cdot V_{AC})+  2vt(V_{AC} \cdot T_{AC}) +t^2(T_{AC} \cdot T_{AC}) \\
&= &  v^2+ \lambda t^2.
\end{eqnarray*}
This shows  the equation $2v^2+2\lambda t^2=1$ defines the set of parallelograms whose $AC$ diagonal has squared length $1/2$. A similar argument shows $P_{BD} \cdot P_{BD} = u^2+\mu t^2$ and hence $2u^2+2\mu t^2=1$  defines the set of parallelograms whose $BD$ diagonal has squared length $1/2$. 
 The parallelograms that lie in this intersection of these two quadrics have diagonals of squared length $1/2$, and so these parallelograms are precisely the  unit rectangles in $\Pi$.


Finally, if $\lambda \ne 0$ and $\mu \ne 0$, the equations  $2v^2+2\lambda t^2=1$  and  $2u^2+2\mu t^2=1$
 define two elliptical cylinders that meet on the unit sphere in $\Pi$.   Otherwise, if $\lambda =0$, then $2v^2+2\lambda t^2=1$ defines a pair of parallel planes in $\Pi$ and similarly for $\mu =0$ and the second surface.  
 \end{proof}



\begin{figure}[h] \label{cylindersgraphic}
 \begin{center}
 \includegraphics[width=0.75\textwidth,scale=.03]{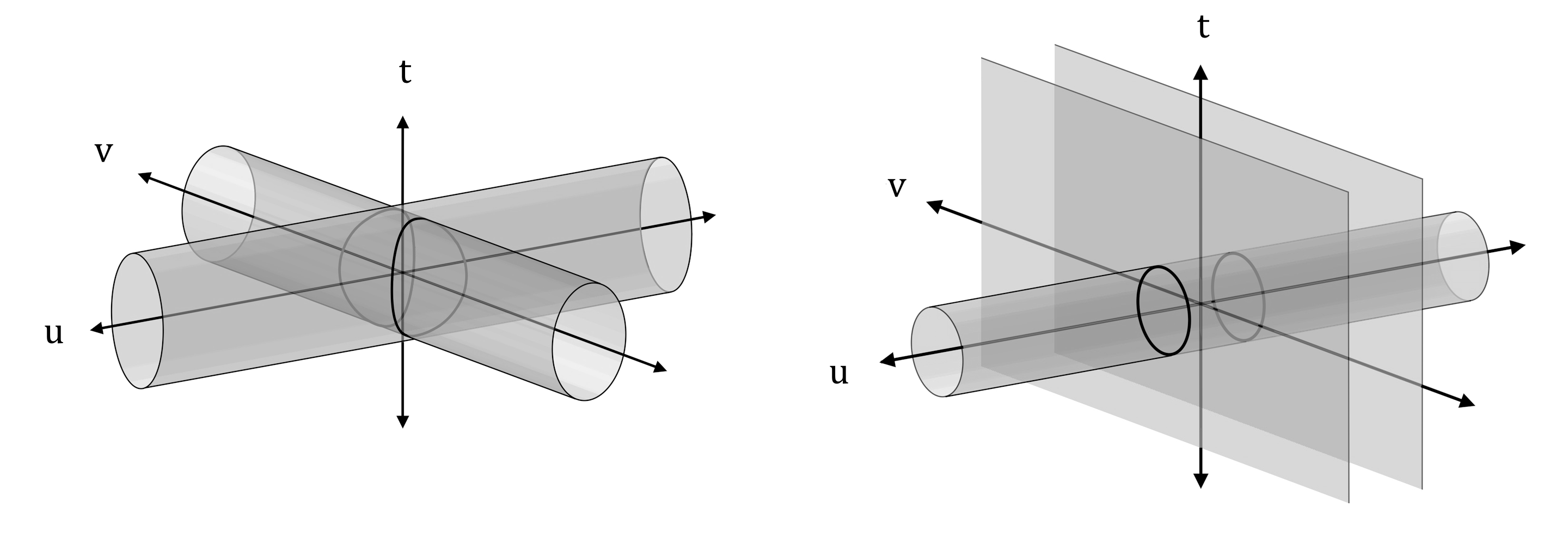} 
 \end{center}
 \caption{For the figure on the left, one cylinder is the set of conformally inscribed parallelograms whose $AC$ diagonal has squared length $1/2$ and the other consists of the parallelograms whose $BD$ diagonal has squared length $1/2$. The  points on the intersection of the two cylinders are the unit rectangles in $\Pi$, and hence the space curves formed by this intersection lie on the unit sphere.  On the right, the two planes represent the extremal situation in which $\mu=0$.}
\end{figure}

 The cylinders in Theorem~\ref{cylinders} are identical up to rotation if and only $\lambda = \mu$, if and only if the parallelogram $T$ is a rectangle.   As we show in the next theorem, this condition is significant for the geometry of the set of rectangles inscribed in  the configuration $\C$ because it coincides with a degeneracy condition for  $\C$. To define what is meant by ``degenerate'' here, 
we recall how in \cite[Definition~2.2]{OW2}  a pair of diagonals $E$ and $F$ are defined for the configuration $\C$. (These diagonals are lines in $\P^2$ rather that line segments.)
  
   If $A \ne B$, then the {\it diagonal} 
 $E$ of $\C$ is the line in $\P^2$ through the origin ($= C \cap D$) and $A \cap B$, which is possibly at infinity.  Otherwise, if $A = B$, then $E$ is the line through the origin that is orthogonal to $A$.  
 
If  $A \ne D$, then the {\it diagonal}
$F$ is the line in $\P^2$ through $A \cap D$ and $B \cap C$. 
Otherwise, if $A = D$, then $F$ is the line through $B \cap C$ that is orthogonal to $A$.

\begin{definition}  \label{deg} The configuration ${\bf C}$ is {\it degenerate} if the diagonals of $\C$ are perpendicular. 
\end{definition}

(We are assuming here the convention that the line at infinity for the projective closure $\P^2$ of $\R^2$ is perpendicular to every line in $\R^2$.) 

The {\it rectangle locus} for $\C$, which is discussed again in Section 6, is the set of centers of the rectangles inscribed in $\C$.  
The motivation for the terminology of degeneracy in Definition~\ref{deg} is that if neither pair $A,C$ or $B,D$ consists of parallel lines, then ${\bf C}$ is degenerate if and only if the rectangle locus is a degenerate hyperbola with one of the two lines that comprise the hyperbola possibly at infinity \cite[Section 9]{OW2}.  More generally, regardless of whether $A,C$ or $B,D$ consist of parallel lines, the set of rectangles inscribed in $\C$ may be viewed as a subspace of $\R^8$, with each vertex occupying two coordinates in $\R^8$. In this space, $\C$ is degenerate if and only if the set of rectangles is a pair of intersection lines in $\P^8$ \cite[Theorem~7.1]{OW2}.   Theorem~\ref{rect cor} shows  how degeneracy is reflected in the set of rectangles in~$\Pi$.

\begin{theorem} \label{rect cor} In the coordinates $(u,v,t)$, the set of rectangles in $\Pi$ is defined by the equation $v^2= u^2 +(\mu-\lambda)t^2.$  This set is a pair of planes defined by $v=\pm u$ if and only if the parallelogram $T$ from Theorem~\ref{ortho} is a rectangle, if and only if $\C$ is degenerate. Otherwise, 
 the set of rectangles is 
an elliptical cone whose central axis is either the $u$ axis or the $v$ axis  and whose apex is at the origin.
\end{theorem}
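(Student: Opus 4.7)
First, I would derive the defining equation using the geometric criterion that a parallelogram is a rectangle if and only if its two diagonals have equal length. The identities $P_{AC}\cdot P_{AC} = v^2 + \lambda t^2$ and $P_{BD}\cdot P_{BD} = u^2 + \mu t^2$ were already established inside the proof of Lemma~\ref{cylinders}. Setting them equal gives $v^2 + \lambda t^2 = u^2 + \mu t^2$, which rearranges immediately to the claimed equation $v^2 = u^2 + (\mu - \lambda)t^2$.

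Next, I would handle the plane/cone dichotomy by the sign of $\mu - \lambda$. If $\mu = \lambda$, the equation becomes $v^2 = u^2$, factoring as $(v-u)(v+u)=0$ and so defining the pair of planes $v = \pm u$; conversely, this pair of planes forces the coefficient of $t^2$ to vanish. Since $\lambda + \mu = 1$ by Notation~\ref{lambda}, the condition $\lambda = \mu$ is equivalent to $\lambda = \mu = \tfrac{1}{2}$, which is the statement $T_{AC}\cdot T_{AC} = T_{BD}\cdot T_{BD}$. Because $T$ is a parallelogram, equal-length diagonals is equivalent to $T$ being a rectangle.

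The main obstacle is the equivalence ``$T$ is a rectangle if and only if $\C$ is degenerate,'' since nothing in the construction of $T$ makes this evident. My plan is to reduce to \cite[Theorem~7.1]{OW2}, which says that the set of rectangles inscribed in $\C$, viewed in $\P^8$, is a pair of intersecting lines if and only if $\C$ is degenerate. To bridge the two settings, I would use the linear vertex map $\Pi \to \R^8$ that sends each conformally inscribed parallelogram to the tuple of its four vertices. This map is injective, since the vertex on $B(w)$ already recovers the scale via $w = y_B - m_B x_B$, and the four vertices then determine the parallelogram. Projectivizing yields an embedding $\P\Pi \hookrightarrow \P^8$ that identifies the projective set of rectangles in $\P\Pi$ with the set of rectangles studied in \cite{OW2} (both are parameterized up to scale by the rectangles inscribed in $\C$, together with those at infinity). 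A pair of planes through the origin in $\Pi$ projectivizes to a pair of lines in $\P\Pi \cong \P^2$, which then embeds as a pair of lines in $\P^8$; this is exactly the degeneracy picture. Running the chain of equivalences in reverse yields the desired characterization.

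Finally, for the cone case assume $\mu \neq \lambda$. The equation $v^2 - u^2 - (\mu-\lambda)t^2 = 0$ is a nondegenerate homogeneous quadratic vanishing at the origin, hence defines a quadric cone with apex there. If $\mu > \lambda$, rewriting as $v^2 = u^2 + (\mu-\lambda)t^2$ makes constant-$v$ slices the ellipses $u^2 + (\mu-\lambda)t^2 = v^2$ with positive definite cross-sectional form, so the central axis is the $v$-axis. If $\mu < \lambda$, rewriting as $u^2 = v^2 + (\lambda-\mu)t^2$ shows constant-$u$ slices are ellipses in $(v,t)$ and the central axis is the $u$-axis. Either way the cone is elliptical, completing the proof.
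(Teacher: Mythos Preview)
Your proof is correct and follows essentially the same route as the paper: both derive the equation from the diagonal-length identities established in the proof of Lemma~\ref{cylinders}, both read off the plane/cone dichotomy from the sign of $\mu-\lambda$, and both invoke \cite[Theorem~7.1]{OW2} for the equivalence with degeneracy of~$\C$. The only cosmetic differences are that you equate the diagonals directly (rather than first scaling to unit rectangles) and that you phrase the link to \cite{OW2} via the projectivized vertex embedding $\P\Pi\hookrightarrow\P^8$, whereas the paper slices with the affine plane of scale-$1$ parallelograms from Corollary~\ref{w cor}; these are equivalent packagings of the same argument.
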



\begin{proof}
By Lemma~\ref{cylinders}, the set of unit rectangles is the intersection of $2u^2+2\mu t^2=1$ and $2 v^2+2\lambda t^2=1$.  Let $R$ be a  rectangle in $\Pi$ with coordinates  $(u,v,t)$. If $u=v=t=0$ (in which case $R$ is the degenerate rectangle consisting of a single point, the origin), then  $v^2= u^2 +(\mu-\lambda)t^2$. Otherwise, suppose not all of $u,v,t$ are zero, so that $r:=\|R\|$ is not zero. Then $({u}{r}^{-1},{v}{r}^{-1},{t}{r}^{-1})$ is a unit rectangle and so $2u^2+2\mu t^2=r^2$ and $2 v^2+2\lambda t^2=r^2$, from which we conclude $v^2= u^2 +(\mu-\lambda)t^2$.  

Conversely, if $(u,v,t)$ is a parallelogram for which  $v^2= u^2 +(\mu-\lambda)t^2$, then with $r = (2u^2+2\mu t^2)^{\frac{1}{2}}=(2 v^2+2\lambda t^2)^{\frac{1}{2}}$, we have if $r=0$ that $u=v=t=0$, and hence $(u,v,t)$ is a degenerate rectangle, while if  $r \ne 0$, then $({u}{r}^{-1},{v}{r}^{-1},{t}{r}^{-1})$ is by Lemma~\ref{cylinders} a unit rectangle, and so $(u,v,t)$ is a rectangle, which proves the set of rectangles in $\Pi$ is defined by the equation $v^2= u^2 +(\mu-\lambda)t^2$. If $\lambda \ne \mu$,  this equation defines an elliptical cone whose central axis is either the $u$-axis or $v$-axis and whose apex is at the origin. 

Now suppose $T$ is a rectangle. By Lemma~\ref{cylinders}, $\lambda = \mu$, and so the set of rectangles is defined by $v^2=u^2$ and hence is the pair of planes defined by $v=\pm u$.
By Corollary~\ref{w cor}, 
the set of parallelograms  inscribed in $\C$ is a plane in $\Pi$ that does not go through the origin. Therefore, this plane intersects the pair of planes defined by $v=\pm u$ in a pair of lines, and so $\C$ is degenerate by \cite[Theorem~7.1]{OW2}. 
Conversely, if $\C$ is degenerate, this same reference implies  the plane of parallelograms inscribed in $\C$ intersects the surface defined by $v^2= u^2 +(\mu-\lambda)t^2$ in a pair of lines. Since a plane intersects an elliptical cone in a pair of lines if and only if the plane goes through the apex, this implies  the surface defined by $v^2= u^2 +(\mu-\lambda)t^2$ is not an elliptical cone, and thus $\lambda = \mu$. Therefore, $T$ is a rectangle. 
\end{proof}


\section{Conformal solutions}

By Lemma~\ref{cylinders}, the set of unit rectangles is an algebraic curve in $\Pi$, a bicylindrical curve, that lies on the unit sphere in $\Pi$. 
In this section we show the set  of unit rectangles is a union of two simple closed analytic curves, either one of which can be taken as a conformal solution for $\C$ in the following sense.



 
\begin{definition} 
 A {\it conformal solution} for ${\bf C}$ is   
 a set ${\mathcal{R}}$  of rectangles  in $\Pi$ such that each rectangle $R$ inscribed in $\C$ and   at infinity for $\C$ 
 is scaled from a rectangle $R'$ in ${\mathcal{R}}$, 
and for all but at most one such rectangle $R $, there is a unique rectangle $R'$ in 
 ${\mathcal{R}}$ that scales to $R$. 
 
 \end{definition} 
 
 Thus a conformal solution is a  solution for the rectangle inscription problem up to scale that includes also  rectangles at infinity. The last condition that  permits two rectangles in ${\mathcal{R}}$ to scale to  the same inscribed rectangle  $R$
 for at most one choice of  $R$ is needed when dealing with a singularity that occurs in the degenerate case. This is explained by Corollary~\ref{equiv}.

 The symmetry of the set of unit rectangles  about the origin  implies that 
 if ${\mathcal{R}}$ is a conformal solution for ${\bf C}$, so is $-{\mathcal{R}} =\{-R:R \in {\mathcal{R}}\}$. With the coordinates $(u,v,t)$, finding a conformal solution is a straightforward matter of parametrization, as we show in the proof of the next theorem. 
 
 
 
  \begin{theorem} \label{locus}
  There is a  simple closed analytic curve ${\mathcal{R}}$  in $\Pi$ such that ${\mathcal{R}}$ is a 
 conformal solution for $\C$ and  ${\mathcal{R}} \cup -{\mathcal{R}}$ is the set   of unit rectangles in $\Pi$ $($see Figure $6)$.   %

 \end{theorem}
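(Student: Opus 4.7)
The plan is to parametrize the intersection of the two quadric surfaces from Lemma~\ref{cylinders} explicitly and exhibit $\mathcal{R}$ as one symmetric half of this intersection. By swapping the roles of $u$ and $v$ if necessary we may assume $\mu\ge\lambda$, and hence $\mu>0$ since $\lambda+\mu=1$. Parametrize the first cylinder $2u^2+2\mu t^2=1$ by $u=\cos\theta/\sqrt{2}$, $t=\sin\theta/\sqrt{2\mu}$, and substitute into $2v^2+2\lambda t^2=1$ to obtain $v^2=(\mu-\lambda\sin^2\theta)/(2\mu)$. The proof then splits according to whether $\lambda<\mu$ (the non-degenerate case, by Theorem~\ref{rect cor}) or $\lambda=\mu$ (the degenerate case).

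In the non-degenerate case $v^2$ is strictly positive for every $\theta$, so the intersection of the two cylinders decomposes as the disjoint union of two smooth branches. I would take
\[
\mathcal{R} := \left\{\left(\frac{\cos\theta}{\sqrt{2}},\; \sqrt{\frac{\mu-\lambda\sin^2\theta}{2\mu}},\; \frac{\sin\theta}{\sqrt{2\mu}}\right) : \theta\in[0,2\pi)\right\},
\]
which is a simple closed real-analytic curve. The antipodal map $R\mapsto -R$ corresponds to $\theta\mapsto\theta+\pi$ combined with a sign flip on the square root, so $-\mathcal{R}$ is the branch with $v<0$; the two branches are disjoint and their union is the full set of unit rectangles described by Lemma~\ref{cylinders}. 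Each nonzero rectangle $R$ inscribed in $\C$ or at infinity has its two unit representatives $\pm R/\|R\|$ split between $\mathcal{R}$ and $-\mathcal{R}$, yielding a unique representative in $\mathcal{R}$, which verifies the conformal solution property.

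For the degenerate case $\lambda=\mu$ the formula becomes $v^2=\cos^2\theta/2$, and the intersection splits into the two ellipses $v=u$ and $v=-u$ on the cylinder, meeting transversally at the antipodal singular points $S_\pm=(0,0,\pm 1/\sqrt{2\mu})$. I would construct $\mathcal{R}$ by joining the half-ellipse with $u=v\ge 0$ to the half-ellipse with $-u=v\ge 0$ at the common endpoints $S_\pm$, producing a simple closed curve. Then $-\mathcal{R}$ consists of the complementary halves of the two ellipses, so $\mathcal{R}\cup -\mathcal{R}$ is again the full set of unit rectangles, while $\mathcal{R}\cap -\mathcal{R}=\{S_+,S_-\}$; since both of these points are scaled copies of the single rectangle $T$, only this one rectangle has two representatives in $\mathcal{R}$, consistent with the ``at most one'' exception clause.

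The main obstacle is the degenerate case, specifically the analyticity assertion: the two half-ellipses meet at $S_\pm$ with distinct tangent directions, so the glued curve is only piecewise real-analytic in the classical pointwise sense. Either one reads ``analytic'' as ``locally cut out by analytic equations'' (which the bicylindrical intersection is, globally), or a finer reparametrization is needed that smooths the joins at $S_\pm$ while preserving both the simple closed topology and the conformal solution property. The non-degenerate case, by contrast, requires only that the strict positivity of $\mu-\lambda\sin^2\theta$ be confirmed throughout.
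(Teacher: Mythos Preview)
Your parametrization is essentially the paper's with the roles of $(u,\lambda)$ and $(v,\mu)$ (and of sine and cosine) interchanged; the paper handles all cases with a single formula $\Phi$ rather than separating the degenerate case. Your concern about analyticity at the joins when $\lambda=\mu$ is well-founded and applies equally to the paper's own argument: in that case the first coordinate of $\Phi$ reduces to $\tfrac{1}{\sqrt{2}}\lvert\sin\theta\rvert$, so the paper makes exactly the gloss you flag rather than resolving it.
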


 \begin{proof}
 By Lemma~\ref{cylinders},
 the set of unit rectangles is the intersection of the  surfaces  defined by $2u^2+2\mu  t^2=1$ and $2v^2+2\lambda t^2=1$. 
We consider two cases in order to define ${\mathcal{R}}$.

Suppose first 
 $\lambda \geq \mu$. Since $\lambda +\mu =1$, this implies 
  $\lambda >0$.
  At the end of the proof we explain the modification needed if $\lambda < \mu$.
 A parallelogram in $\Pi$ with coordinates $(u,v,t)$ is on the intersection of these two quadrics if and only if $ 2v^2+2\lambda t^2=1$ and $(1-2v^2)\mu = ({1}-2u^2)\lambda$; if and only if  
 $$t=\pm \sqrt{\frac{{1}-2v^2}{2\lambda}}  \: {\mbox{ and }} \:
 u = \pm \sqrt{
 \frac{\lambda - {\mu}({1}-2v^2)}{2\lambda }}.$$
 Since $2v^2 + 2\lambda t^2 =1,$ we have $\sqrt{2}\: |v| \leq {1}$. Substituting $v = \frac{1}{\sqrt{2}}\sin(\theta)$, we obtain
 that the intersection of the two quadrics is the set of parallelograms with coordinates 
 $$
 \left( 
 \pm \frac{1}{\sqrt{2 \lambda}}\sqrt{\lambda - {\mu}\cos^2(\theta)},\: 
 \pm\frac{1}{\sqrt{2}}\sin(\theta),\:
 \pm\frac{1}{\sqrt{2\lambda}}\cos(\theta)\right).
 $$
Define $$\Phi:[0,2\pi] \rightarrow \Pi:
\theta \mapsto \left( 
 \frac{1}{\sqrt{2 \lambda}}\sqrt{\lambda - {\mu}\cos^2(\theta)},\:  
\frac{1}{\sqrt{2}}\sin(\theta),\:
\frac{1}{\sqrt{2\lambda}}\cos(\theta)\right).$$
Since $\lambda \geq \mu$, $\Phi$ is defined and analytic on all of $[0,2\pi]$.  
Using the fact that $\sin(-\theta) = -\sin(\theta)$ and $\cos(-\theta) = \cos(\theta)$, it follows that the intersection of the two quadrics (which is the set of unit rectangles) is the union of the image of $\Phi$ and the image of $-\Phi$.   The maps $\Phi$ and $-\Phi$ define simple closed analytic curves ${\mathcal{R}} = $ Image($\Phi$) and $-{\mathcal{R}}= $ Image($-\Phi$) on the unit sphere, 
and the union of these two curves  is the set of unit rectangles. 

Still in the case $\lambda \geq \mu$, to see that 
each rectangle inscribed in $\C$ or at infinity 
 is scaled from a rectangle in ${\mathcal{R}}$, let $R$ be a rectangle in $\Pi$ that is either inscribed in $\C$ or at infinity.  
 Then 
 the coordinates $(u,v,t)$ of $R$  are not all zero. Let $r = \| R\|=u^2+v^2+t^2$. Now $R$ is in the image of $\Phi$ if and only if $v \geq 0$, while $R$ is in the image of $-\Phi$ if and only if $v \leq 0$.  
 If $v \geq 0$, then $(\frac{u}{r},\frac{v}{r},\frac{t}{r})$ gives the coordinates of a unit rectangle $R'$ in the image of $\Phi$ such that $rR' = R$.   Otherwise, if $v \leq 0$, then 
  $(-\frac{u}{r},-\frac{v}{r},-\frac{t}{r})$ gives the coordinates of a unit rectangle $R'$ in the image of $-\Phi$ such that $rR' = R$.    A similar argument shows that each rectangle inscribed in $\C$ or at infinity 
 is scaled from a rectangle in $-{\mathcal{R}}$.
 
Next, we show that
  for all but at most one rectangle $R $ in $\Pi$, there is only one rectangle $R'$ in 
 ${\mathcal{R}}$ that scales to  $R$.
 Suppose two rectangles $R_1,R_2$  in ${\mathcal{R}}$
   scale to the same rectangle $R$ inscribed in $\C$ or at infinity. 
Since the image of $\Phi$ lies on the unit sphere and in the hemisphere  for which $u \geq 0$, this implies that $R_1$ and $R_2$ are the parallelograms $(0,0,1)$ and $(0,0,-1)$, both of which scale to the same parallelogram inscribed in $\C$ or at infinity for $\C$, in this case, $R$. This proves the theorem.

 If instead $\mu > \lambda$, 
  we switch the roles of the $u$ and $v$ and apply the same argument. 
  Specifically, we use the fact that a parallelogram with coordinates $(u,v,t)$ is on the intersection of the two cylinders if and only if 
   $$t=\pm \sqrt{\frac{{1}-2u^2}{2\mu}}  \: {\mbox{ and }} \:
 v = \pm \sqrt{
 \frac{\mu - {\lambda}({1}-2u^2)}{2\mu }}.$$
 %
  Using this as a starting point, the construction of $\Phi$ now proceeds as before, with the roles of $u$ and $v$ switched from the original argument. 
  \end{proof}

\begin{figure}[h] \label{cylindersgraphic}
 \begin{center}
 \includegraphics[width=0.65\textwidth,scale=.03]{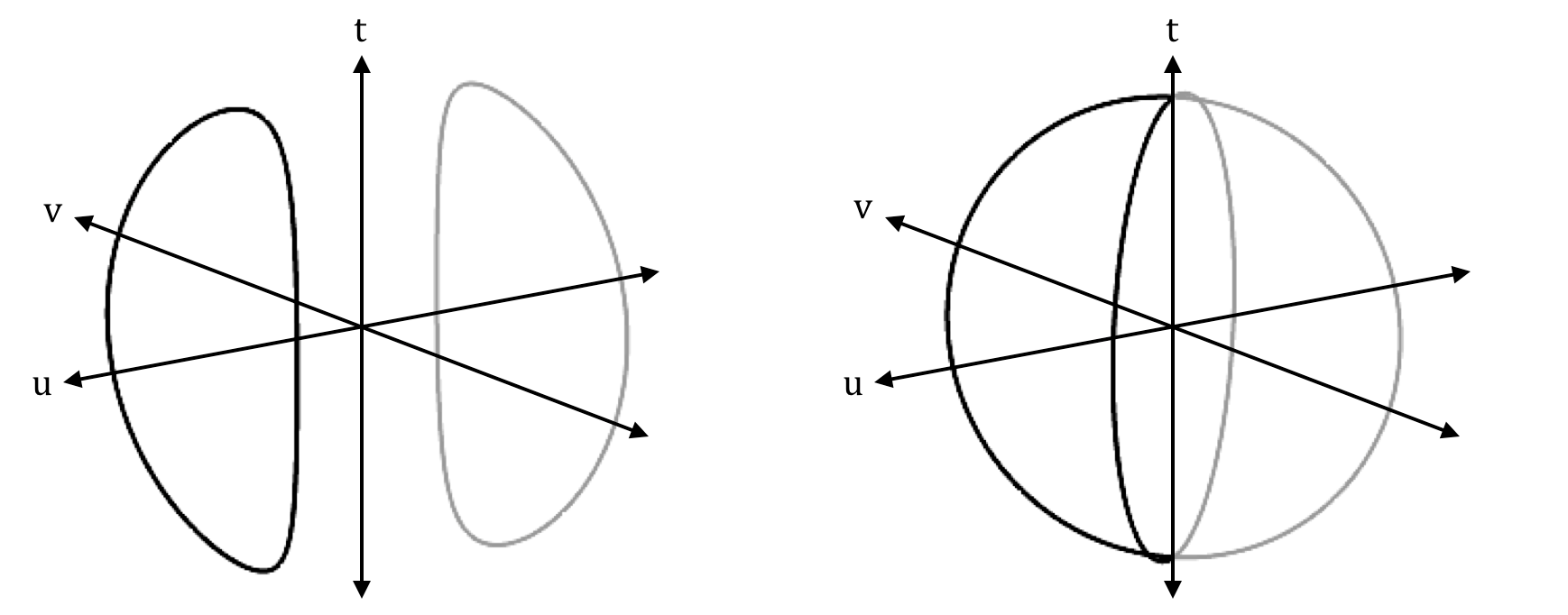} 
 \end{center}
 \caption{
 On the left is the pair of conformal solutions  for a non-degenerate configuration $\C$, one in black and the other gray. Each rectangle inscribed in $\C$ is scaled from a rectangle on the conformal solution, as is each rectangle at infinity. The figure at right is the pair of conformal solutions for a degenerate configuration. The conformal solutions in this case are not the irreducible algebraic curves, the  planar circles, but instead the analytic curves pieced from  halves of each circles. 
 The gray and black curves meet at the rectangles $T$ and $-T$. 
 }
\end{figure}

Theorem~\ref{locus} can be carried further to assert that the conformal solutions in the theorem give continuous flows of rectangles, where by a flow we mean a map $f:  {\mathcal{R}}  \times \R \rightarrow {\mathcal{R}}$ such that for all $R \in {\mathcal{R}}$ and $\theta, \rho \in \R$, we have $ f(R,0) = R$ and $f(f(R,\theta),\rho) =f(R,\theta+\rho).$

\begin{corollary} \label{flow}
There is a continuous flow of unit rectangles in $\Pi$ that is a conformal solution for $\C$ and
passes through  rectangles  at infinity for $\C$.  
\end{corollary}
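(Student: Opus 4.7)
The plan is to transport the natural translation flow on the circle to $\mathcal{R}$ along the analytic parametrization $\Phi:[0,2\pi]\to \mathcal{R}$ constructed in the proof of Theorem~\ref{locus}. The first step is to show that $\Phi$ descends to a homeomorphism $\widetilde{\Phi}:\R/2\pi\SZ\to \mathcal{R}$. Its $v$ and $t$ coordinates are $\tfrac{1}{\sqrt{2}}\sin\theta$ and $\tfrac{1}{\sqrt{2\lambda}}\cos\theta$, and these already recover $\theta$ modulo $2\pi$, so $\Phi$ is injective modulo $2\pi$; combined with analyticity, $2\pi$-periodicity, and the principle that a continuous bijection from a compact space to a Hausdorff space is a homeomorphism, this yields $\widetilde{\Phi}$.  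The case $\mu>\lambda$ is handled by the analogous parametrization with the roles of $u$ and $v$ reversed, as in the second half of the proof of Theorem~\ref{locus}.

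Next, set
$$f(\widetilde{\Phi}(\theta),\rho) := \widetilde{\Phi}(\theta+\rho).$$
Continuity is immediate from the continuity of $\widetilde{\Phi}$ and of translation, and the flow axioms $f(R,0)=R$ and $f(f(R,\theta),\rho)=f(R,\theta+\rho)$ reduce to the group law on $\R/2\pi\SZ$. Since $\mathcal{R}$ is a conformal solution for $\C$ by Theorem~\ref{locus}, so is the union of the orbits of $f$.

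It remains to verify that $f$ visits rectangles at infinity. By Corollary~\ref{w cor} the scale map $(u,v,t)\mapsto uw_U+vw_V+tw_T$ is linear, so if a non-zero rectangle $R$ at infinity for $\C$ is written as $R=rR'$ with $R'\in \mathcal{R}$ (which is possible by Theorem~\ref{locus}), then $r\ne 0$ forces the unit rectangle $R'$ itself to have scale zero. Hence $\mathcal{R}$ contains unit rectangles at infinity, and because any single orbit of $f$ sweeps out all of $\mathcal{R}$, the flow passes through them. The construction is almost entirely bookkeeping once Theorem~\ref{locus} is available; the only delicate point is that $\Phi$ genuinely identifies only angles differing by $2\pi$, which the $(v,t)$-coordinates settle directly.
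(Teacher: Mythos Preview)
Your argument is correct and follows the same line as the paper: pull back the translation flow on $\R/2\pi\SZ$ along the parametrization $\Phi$ from the proof of Theorem~\ref{locus}. You are in fact more careful than the paper in two places. First, you verify that $\Phi$ is injective modulo $2\pi$ using the $(v,t)$-coordinates, which is needed for $f$ to be well defined; the paper simply writes $f(R_\theta,\rho)=R_{\theta+\rho}$ without checking this. Second, you explicitly argue that $\mathcal{R}$ contains unit rectangles at infinity (via linearity of the scale map and the fact that a nonzero rectangle at infinity scales from some $R'\in\mathcal{R}$ with nonzero factor), whereas the paper's proof leaves this implicit in the definition of conformal solution.
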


\begin{proof}
By Theorem~\ref{locus}, there is a conformal solution ${\mathcal{R}}$ for $\C$ consisting of unit rectangles, and this solution is a simple closed analytic curve $\Phi:[0,2\pi] \rightarrow {\mathcal{R}}$. The proof of Theorem~\ref{locus} shows that $\Phi(\theta) = \Phi(\theta+2\pi)$. For each $\theta \in \R$, let $R_\theta = \Phi(\theta)$.  
Define a map $f: {\mathcal{R}} \times \R \rightarrow {\mathcal{R}}$ by $f(R_\theta,\rho) = R_{\theta+\rho}$. Then $f(R_\theta,0) = R_\theta$ and $f(f(R_\theta,\rho),\tau) = R_{\theta+\rho+\tau}$, so that $f$ gives a flow. The flow  $f$ is continuous since $\Psi$ is continuous. 
\end{proof}

\begin{corollary}  \label{equiv}
Let ${\mathcal{R}}$ be the  conformal solution   given by Theorem~\ref{locus}.
Then each rectangle inscribed in $\C$ or  at infinity is scaled from a unique rectangle in ${\mathcal{R}}$ if and only if 
the two conformal solutions ${\mathcal{R}}$ and $-{\mathcal{R}}$  have no rectangles in common, if and only if $\C$ is not degenerate.

   

 \end{corollary}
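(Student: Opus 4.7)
The plan is to split the three-way equivalence into two pieces: first, that the uniqueness-of-scaling condition is equivalent to $\mathcal{R} \cap -\mathcal{R} = \emptyset$, and second, that this intersection is empty if and only if $\C$ is non-degenerate. The first piece will be a direct consequence of the unit-norm constraint on elements of $\mathcal{R}$, while the second will be read off from the explicit parametrization $\Phi$ constructed in the proof of Theorem~\ref{locus}.

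For the first piece, the key observation is that if $R_1, R_2 \in \mathcal{R}$ both scale to the same rectangle $R$ (inscribed in $\C$ or at infinity) via $R = \lambda_1 R_1 = \lambda_2 R_2$ for nonzero scalars, then the unit-norm condition $\|R_1\| = \|R_2\| = 1$ forces $|\lambda_1| = |\lambda_2|$, so $R_1 = \pm R_2$. Thus two distinct elements of $\mathcal{R}$ scale to the same rectangle precisely when one is the negative of the other, which is exactly the condition $\mathcal{R} \cap -\mathcal{R} \neq \emptyset$. Conversely, any $R \in \mathcal{R} \cap -\mathcal{R}$ immediately furnishes two distinct rectangles $R, -R$ in $\mathcal{R}$ that scale via $\pm 1$ to the same rectangle $R$. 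This settles the first equivalence.

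For the second piece, I will appeal to the parametrization $\Phi$ from the proof of Theorem~\ref{locus}. Assume first that $\lambda \geq \mu$. Then $\mathcal{R} = \Phi([0, 2\pi])$ lies in the hemisphere $u \geq 0$ and $-\mathcal{R}$ lies in $u \leq 0$, so any common point must satisfy $u = 0$. From the formula
\[ u(\theta) = \frac{1}{\sqrt{2\lambda}}\sqrt{\lambda - \mu \cos^2(\theta)}, \]
the equation $u(\theta) = 0$ is solvable if and only if $\lambda \leq \mu$, which combined with $\lambda \geq \mu$ forces $\lambda = \mu$; in that event $\theta = 0, \pi$ give $\mathcal{R} \cap -\mathcal{R} = \{T, -T\}$. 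By Theorem~\ref{rect cor}, the condition $\lambda = \mu$ is equivalent to $T$ being a rectangle, and to $\C$ being degenerate. The case $\mu > \lambda$ is handled symmetrically by swapping the roles of the $u$ and $v$ coordinates, as done in the proof of Theorem~\ref{locus}.

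The main obstacle I anticipate is only a small bookkeeping matter: verifying that in the degenerate case the candidate common points $\pm T$ genuinely lie on $\mathcal{R}$, so that the intersection is really nonempty there. This follows from $\lambda = \mu = 1/2$, which makes the cylinder equations of Lemma~\ref{cylinders} both reduce to $2\lambda t^2 = 1$ at $u = v = 0$, satisfied by $t = \pm 1$; thus the parallelograms $(0, 0, \pm 1) = \pm T$ are unit rectangles and do belong to $\mathcal{R}$.
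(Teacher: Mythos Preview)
Your proof is correct. The second equivalence ($\mathcal{R}\cap-\mathcal{R}=\emptyset$ iff $\C$ non-degenerate) is argued exactly as in the paper, via the explicit first coordinate $u(\theta)=\frac{1}{\sqrt{2\lambda}}\sqrt{\lambda-\mu\cos^2\theta}$ from $\Phi$.

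The organization of the first equivalence differs from the paper's. The paper proves the two outer conditions each equivalent to degeneracy: it shows directly that non-uniqueness forces the two offending unit rectangles to be $(0,0,\pm1)=\pm T$, hence $T$ is a rectangle and $\C$ is degenerate by Theorem~\ref{rect cor}, and conversely. You instead chain through the middle condition, proving uniqueness $\Leftrightarrow$ $\mathcal{R}\cap-\mathcal{R}=\emptyset$ by the clean observation that two unit vectors on the same line through the origin must be negatives of one another. This is a bit more conceptual and avoids re-invoking the $(0,0,\pm1)$ characterization from the proof of Theorem~\ref{locus}; the paper's route, on the other hand, identifies the exceptional rectangle explicitly as $T$, which it then uses in the subsequent discussion.

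One small wording issue in your converse: when $R\in\mathcal{R}\cap-\mathcal{R}$ you say $R$ and $-R$ ``scale via $\pm1$ to the same rectangle $R$,'' but $R$ itself need not be inscribed in $\C$ (its scale $w$ may be neither $0$ nor $1$). The fix is immediate: if $w\ne0$ both $R$ and $-R$ scale to $(1/w)R$, which is inscribed in $\C$; if $w=0$ then $R$ is already at infinity.
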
 
 
 \begin{proof}  
 We prove the theorem for the case $\lambda \geq \mu$. If instead $\mu > \lambda$, then as in the proof of Theorem~\ref{locus}, we switch the roles of $u$ and $v$ and argue similarly. 
Let $\Phi:[0,2\pi] \rightarrow \Pi$ be the map defined in the proof of Theorem~\ref{locus} for the case $\lambda \geq \mu$.  
  Suppose two rectangles  in the image of $\Phi$ scale to the same rectangle inscribed in $\C$ or at infinity. As in the proof of Theorem~\ref{locus}, this implies the two rectangles have coordinates $(0,0,\pm 1)$, and hence these two rectangles are $T$ and $-T$.  By Theorem~\ref{rect cor}, 
  $\C$ is degenerate.
  Conversely, if $\C$ is degenerate, then by Theorem~\ref{rect cor}, $\lambda = \mu$, and so the parallelograms with coordinates $(0,0,\pm {1})$ are rectangles by  Theorem~\ref{rect cor}. These two rectangles can both be scaled to a rectangle that is inscribed in $\C$ or at infinity. 


Next, 
suppose ${\mathcal{R}}$ and $-{\mathcal{R}}$ intersect. There are $\theta_1,\theta_2 \in [0,2\pi]$ such that $\Phi(\theta_1) = \Phi(\theta_2)$, and, comparing the first coordinates of these points, we have $$\sqrt{\lambda-\mu\cos^2(\theta_1)} = -\sqrt{\lambda-\mu\cos^2(\theta_2)},$$ which forces $\lambda-\mu\cos^2(\theta_1) = 0$, and so $\lambda \leq \mu$. But we have assumed $\lambda \geq \mu$, so 
we must have $\lambda = \mu$, 
which by Theorem~\ref{rect cor} shows  $\C$ is degenerate. 
  Conversely, if 
  $\C$ is degenerate, then 
  $\lambda = \mu$ and $\Phi(0) = -\Phi(\pi)$, so the two curves intersect. This proves ${\mathcal{R}} \cap -{\mathcal{R}} \ne \emptyset$ if and only if $\C$ is degenerate. 
\end{proof}

The proof shows that if $\C$ is degenerate, then the parallelogram $T$ from Theorem~\ref{ortho} is a rectangle and  ${\mathcal{R}} \cap -{\mathcal{R}} = \{T,-T\}$.

\section{The center map}

So far, our focus has been on the Euclidean space $\Pi$ of parallelograms conformally inscribed in $\C$ and the space curve  of the unit rectangles in $\Pi$. In this case, the dimension of $\Pi$ reflects the three degrees of freedom for specifying parallelograms in $\Pi$, namely the coordinates $u,v,t$ for the orthonormal basis $U,V,T$. The advantage of this representation is that it induces  axes for $\Pi$, ``principal axes,'' that make the geometry of $\Pi$   easier to describe. 
The disadvantage    is that the coordinates $u,v,t$ are less explicitly connected to
the coordinates that express 
 the original four lines   of the configuration $\C$ and the vertices   inscribed in these lines. We work next with a different representation of $\Pi$, one that involves the same coordinates  as those that define the lines $A,B,C,D$. In the next section, this will be called the cylinder model of $\Pi$. In the present section, the focus is on a map that takes parallelograms in $\Pi$ into $\R^3$.


\begin{definition} The {\it center map}
 $\Psi:\Pi\rightarrow \R^3$ is the map that sends
 a parallelogram $P$ in $\Pi$ 
  to $(x,y,w)$, where $(x,y)$ is the center of  $P$  and $w$ is the scale of~$P$.   
 
 \end{definition}

 The image of the center map depends on whether the pairs of lines $A,C$ and $B,D$ consist of parallel lines.

\begin{proposition} \label{invertible} If neither pair $A,C$ or $B,D$ consists of parallel lines, then the center map is an  invertible linear transformation. If exactly one pair is parallel, the image is a plane in $\R^3$ through the origin. If both pairs are parallel,  the image is a line through the origin.  
\end{proposition}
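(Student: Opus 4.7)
The plan is to reduce the proposition to a kernel computation for the linear map $\Psi$. Linearity of $\Psi$ follows immediately from the vector space structure on $\Pi$: adding two parallelograms coordinatewise averages their centers, and the scales add (as noted when the vector space structure on $\Pi$ was introduced in Section~2); scalar multiplication behaves analogously. Since $\dim \Pi = 3$ by Theorem~\ref{ortho}, rank-nullity reduces each clause of the proposition to computing $\dim \ker \Psi$: the first case requires $\ker \Psi = 0$, the second a one-dimensional kernel, and the third a two-dimensional one.

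To compute the kernel, observe that $\Psi(P) = (0,0,0)$ forces the scale $w$ of $P$ to be zero, so $P$ is inscribed in $\C(0)$ and its vertices $(x_L,y_L)$ lie on the lines $L(0) \colon y = m_L x$ for $L \in \{A,B,C,D\}$. The centered-at-origin condition then forces the antipodal relations $(x_C,y_C) = -(x_A,y_A)$ and $(x_D,y_D) = -(x_B,y_B)$. When $m_A \ne m_C$, combining $(x_A,y_A) \in A(0)$ with $(-x_A,-y_A) \in C(0)$ yields $m_A x_A = m_C x_A$, hence $x_A = y_A = 0$; an analogous conclusion holds for the pair $B,D$ whenever $m_B \ne m_D$.

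So in the non-parallel case the kernel is trivial and $\Psi$ is an isomorphism. If exactly one pair is parallel, say $A \parallel C$, then $A(0) = C(0)$, the collapse argument fails for this pair, and the free choice of $(x_A, m_A x_A)$ on the common line produces a one-parameter family of kernel elements, while the other pair still forces $(x_B,y_B) = (0,0)$; the image is therefore a two-dimensional subspace of $\R^3$, i.e., a plane through the origin. When both pairs are parallel both constraints collapse, the kernel is two-dimensional, and the image is a line through the origin.

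The main thing to verify, and the only point requiring care, is that these putative kernel elements genuinely lie in $\Pi$, i.e., that the specified vertices form a parallelogram. This is immediate in every case: the antipodal pairings automatically make the midpoints of both diagonals coincide at the origin, which is the parallelogram condition invoked in the proof of Theorem~\ref{ortho}. No appeal to the heavier defining equations for $\Pi$ is required.
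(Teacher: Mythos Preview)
Your proof is correct and takes a genuinely different route from the paper's. The paper establishes linearity by factoring $\Psi$ through $\R^5$ and then, for the non-parallel case, appeals to an external reference \cite[Lemma~3.2]{OW} to get injectivity; for the parallel cases it argues geometrically that the centers of scale-$w$ parallelograms must lie on the line $L(w)$ equidistant from $A(w)$ and $C(w)$ (and similarly $L'(w)$ for $B,D$), and then reads off the image as $\bigcup_w L(w)$ or its intersection with $\bigcup_w L'(w)$. Your approach instead computes $\ker\Psi$ directly by analyzing parallelograms in $\C(0)$ centered at the origin, and then invokes rank--nullity. This is more self-contained (no external citation is needed), and it pins down the image dimension exactly, whereas the paper's geometric argument, strictly speaking, only shows containment of the image in the plane or line and leaves surjectivity implicit. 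One small slip: you wrote that addition of parallelograms ``averages their centers,'' but in fact the center of $P+Q$ is the \emph{sum} of the centers; the linearity conclusion is of course unaffected.
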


\begin{proof} 
Denote the centers of the parallelograms $U,V,T$ by $(u_1,u_2),$ $(v_1,v_2),$ $(t_1,t_2),$ respectively, and denote the scales of these three parallelograms by $w_U,w_V,w_T$. 
We claim first that if $P$ is a parallelogram with coordinates $(u,v,t)$, then  the center $(x,y)$ of $P$ and scale $w$ of $P$ are given by the equation
$$\begin{bmatrix} 
x \\
y \\
w\\
\end{bmatrix} = 
\begin{bmatrix} 
u_1 & v_1 & t_1 \\
u_2 & v_2 & t_2 \\
w_U & w_V & w_T \\
\end{bmatrix} 
\begin{bmatrix} 
u \\
v \\
t\\
\end{bmatrix}. 
$$
To prove this, observe first that the center map $\Psi$ is the composition of the linear transformation that sends a parallelogram $P$ with scale $w$ and vertices $(x_A,y_A) \in A,(x_C,y_C) \in C$ to the vector  $(x_A,y_A,x_C,y_C,w) \in  \R^5$ and the linear transformation that sends a vector $(x_A,y_A,x_C,y_C,w) \in  \R^5$ to the vector $(\frac{1}{2}(x_A+x_C),
\frac{1}{2}(y_A+y_C), w) \in \R^3$.  
Thus $\Psi$ is a linear transformation, and so the first claim of the proof follows from the fact that for each parallelogram $P \in \Pi$ with coordinates $(u,v,t)$ and scale $w$, $$\Psi(P) = u\Psi(U)+v\Psi(V)+t\Psi(T) = u(u_1,u_2,w_U)+v(v_1,v_2,w_V)+t(t_1,t_2,w_T).$$

If neither pair $A,C$ or $B,D$ consists of parallel lines, then each parallelogram in $\Pi$ is uniquely determined by its center and scale (see \cite[Lemma~3.2]{OW}), and so $\Psi$ is injective and hence invertible since $\Pi$ has finite dimension. 
Next, suppose the lines $A$ and $C$ are parallel. Then for each $w \in \R$,  the line $L(w)$ in $\R^2$ equidistant from both of the parallel lines $A(w)$ and $C(w)$ must contain the midpoint of any line segment joining $A(w)$ and $C(w)$. Thus the centers of the parallelograms in $\Pi$ of scale $w$ must all lie on $L(w)$. Similarly if  $B$ and $D$ are parallel,  the centers of all parallelograms of scale $w$  must lie on a line $L'(w)$ equidistant from $B(w)$ and $D(w)$.  If both pairs $A,C$ and $B,D$ are parallel,   all the parallelograms in $\P$ of scale $w$ have their centers on the intersection of $L(w)$ and $L'(w)$, and so the image of the center map is a line through the origin. 
Otherwise, 
 if only the pair $A,C$ has parallel lines, then the image of the center map is the plane $L = \bigcup_{w \in \R}L(w)$, while if only the pair $B,D$ has parallel lines,  the image is the plane $L' = \bigcup_{w \in \R}L'(w).$ 
\end{proof}

In the next section we will work with a pair of cylinders that play roles similar to those of the cylinders 
 in $\Pi$ defined in Section~3. As shown in Proposition~\ref{old}, these cylinders are often the image of the cylinders in $\Pi$ under the center map.

\begin{definition} 
The {\it $AC$ cylinder} is the set of  points $(x,y,w) \in \R^3$ for which  $(x,y)$ is the midpoint of a line segment of squared length $1/2$ 
 joining $A(w)$ and $C(w)$. 
The $BD$ cylinder  is defined analogously. 
\end{definition}

In the next proposition, we show the $AC$ and $BD$ cylinders are indeed cylinders, possibly flat, where by a  
  {\it flat cylinder}, we mean the set of points $(x,y,w)$ in a plane $\c P$ for which there exist $b,d \geq 0$ with  $d|w| \leq b$. If $d=0$, the flat cylinder is the entire plane $\c P$ (a flat cylinder of ``infinite radius'').

\begin{proposition} \label{old}
If $A$ is not parallel to $C$, then the $AC$ cylinder in the coordinates $(x,y,w)$ is an elliptical cylinder; otherwise, 
the $AC$ cylinder is a flat cylinder. In either case, if $B$ is not parallel to $D$,  the $AC$ cylinder is the image under the center map of the cylinder in $\Pi$ defined by $v^2+2\lambda t^2=1$ $($see Lemma~$\ref{cylinders}).$   
 %
 The analogous statement holds for $B$ and $D$ in place of $A$ and $C$ and 
 $u^2+2\mu t^2=1$ in place of $v^2+2\lambda t^2=1$.
 
\end{proposition}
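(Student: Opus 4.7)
The plan is to prove the two claims separately. First I would establish the shape of the $AC$ cylinder by parametrizing segments joining $A(w)$ and $C(w)$ and studying the locus of midpoints realizing the required squared length $1/2$. Then I would identify this locus with the image under $\Psi$ of the cylinder in $\Pi$ singled out by Lemma~\ref{cylinders}, using the fact that the center of a parallelogram is the common midpoint of its two diagonals.

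For the first claim with $A$ not parallel to $C$, I would use the intersection $q(w) := A(w) \cap C(w)$, which depends linearly on $w$, as an affine origin in each slice $\{w = \text{const}\}$. With $\vec a = (1,m_A)$ and $\vec c = (1,m_C)$ linearly independent, a segment from $p_A = q(w) + s\vec a \in A(w)$ to $p_C = q(w) + t\vec c \in C(w)$ has midpoint $q(w) + \tfrac12(s\vec a + t\vec c)$ and squared length $|s\vec a - t\vec c|^2$. The latter is a positive definite quadratic form in $(s,t)$, so its level set at $1/2$ is an ellipse, and the invertible affine map $(s,t) \mapsto q(w) + \tfrac12(s\vec a + t\vec c)$ carries this ellipse to a congruent ellipse in $\R^2$ centered at $q(w)$. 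Since the ellipse shape is $w$-independent and $q(w)$ is linear in $w$, the union over $w$ is an elliptical cylinder in $\R^3$ with an oblique generator along $(\tfrac{dq}{dw},1)$. When $A$ and $C$ are parallel, the analysis changes: the midpoints must lie on the midline equidistant from $A(w)$ and $C(w)$, and a direct minimization of the squared length over segments with fixed midpoint shows that length-$\sqrt{1/2}$ segments exist exactly when $|w|$ is at most a constant determined by $m_A$ and $b_A$, yielding a flat cylinder of the required form.

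For the image claim with $B$ not parallel to $D$, the forward inclusion is immediate: if $P \in \Pi$ has $AC$-diagonal of squared length $1/2$, then $\Psi(P) = (x,y,w)$ with $(x,y)$ the midpoint of that diagonal, which is a segment from $A(w)$ to $C(w)$, so $\Psi(P)$ lies in the $AC$ cylinder. For the reverse inclusion I would start with $(x,y,w)$ in the $AC$ cylinder, choose a segment $p_Ap_C$ joining $A(w)$ and $C(w)$ with midpoint $(x,y)$ and squared length $1/2$, then use $B \not\parallel D$ to uniquely solve the midpoint equation for a pair $(p_B,p_D) \in B(w) \times D(w)$. Specifically, the equation $m_Bx_B + w = m_D(2x-x_B)$ has a unique solution $x_B$ when $m_B \neq m_D$. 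The four vertices $p_A, p_B, p_C, p_D$ taken in cyclic order form a parallelogram because their two diagonals share the midpoint $(x,y)$, so they define a parallelogram $P \in \Pi$ with $\Psi(P)=(x,y,w)$ and $AC$-diagonal of squared length $1/2$, as required.

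The main obstacle is the bookkeeping when $A$ and $C$ are parallel: the center map restricted to the $\Pi$-cylinder fails to be injective in this case, since multiple parallelograms can share a center and scale but differ in the positions of $p_A$ and $p_C$ along the parallel lines. The proposition only asserts that $\Psi$ maps onto the flat cylinder, so this failure of injectivity does not matter, but the surjectivity argument still requires $B \not\parallel D$ to guarantee existence of $p_B, p_D$ with the prescribed midpoint. The analogous statement for $BD$ in place of $AC$, with $u^2 + \mu t^2 = 1/2$ replacing $v^2 + \lambda t^2 = 1/2$, follows by symmetry with the roles of the two pairs of lines exchanged.
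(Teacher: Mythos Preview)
Your proposal is correct. For the shape of the $AC$ cylinder when $A \not\parallel C$, you take a more geometric route than the paper: you parametrize points on $A(w)$ and $C(w)$ by affine parameters $s,t$ measured from the moving intersection $q(w)$, observe that the squared-length condition cuts out an ellipse in the $(s,t)$-plane, and push this forward by the midpoint map to obtain a translate of a fixed ellipse at each height $w$. The paper instead writes down an explicit $2\times 3$ matrix $M$ with $(x_A-x_C,\,y_A-y_C)^T = M(x,y,w)^T$, so that the squared-length condition becomes ${\bf x}^T M^T M\,{\bf x} = \tfrac12$, and then applies the principal axis theorem to the rank-$2$ positive semidefinite matrix $M^TM$. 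Your argument is cleaner and coordinate-free; the paper's matrix version has the advantage that the same quadratic form reappears in Section~7 when relating the cylinder model to the cone model. One small wording issue: the map $(s,t)\mapsto q(w)+\tfrac12(s\vec a+t\vec c)$ is not an isometry, so its image is an ellipse but not literally \emph{congruent} to the $(s,t)$-ellipse. What you actually need, and correctly use, is that the image ellipses at different $w$ are all translates of one another, since the linear part of the map is independent of $w$.

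For the image claim under the center map, your argument is in fact more complete than what the paper writes: the paper's proof addresses only the shape of the $AC$ cylinder and does not explicitly verify that it equals $\Psi$ of the $\Pi$-cylinder from Lemma~\ref{cylinders}. Your forward inclusion (the center of $P$ is the midpoint of its $AC$ diagonal, which by Lemma~\ref{cylinders} has squared length $\tfrac12$) and your reverse inclusion (given the midpoint and the segment $p_Ap_C$, solve uniquely for $p_B,p_D$ on $B(w),D(w)$ using $m_B\ne m_D$, then note the resulting quadrilateral has coincident diagonal midpoints) fill this in correctly and make explicit why the hypothesis $B\not\parallel D$ is needed.
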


\begin{proof} It is enough to prove the theorem for the lines $A$ and $C$ since the proof for $B$ and $D$ is simply a matter of replacing $A$ with $B$ and $C$ with $D$.  
Suppose first that $A $ is parallel to $ C$. 
For 
 each $w \in \R$,  let $L(w)$ 
be the line  
 in $\R^2$ equidistant from  the parallel lines $A(w)$ and $C(w)$. As in the proof of Proposition~\ref{invertible}, every  midpoint of  a line segment joining $A(w)$ and $C(w)$ lies on $L(w)$, and so the $AC$ cylinder is 
 a subset of the plane 
   ${\c P}=  \bigcup_{w \in \R}L(w)$.  
 Let
  $d$ be the  distance between $A$ and $C$. 
 We claim  the $AC$ cylinder is the flat cylinder consisting of the points $(x,y,w) \in \c P$ for which $\sqrt{2}|w|d \leq 1$. 
 If  $(x,y,w) $ is a point on the $AC$ cylinder, then
  the distance between $A(w)$ and $C(w)$ is $d|w|$, and so since $(x,y) \in L(w)$ and 
  $(x,y)$ is the midpoint of a line segment of squared length $1/2$ joining $A(w)$ and $C(w)$,  it follows that  
  $\sqrt{2}|w|d \leq 1$. 
On the other hand, if $(x,y,w) \in \c P$   and $\sqrt{2}|w|d \leq 1$, then $(x,y) \in L(w)$ and since the distance between $A(w)$ and $C(w)$ is $|w|d \leq \frac{1}{\sqrt{2}}$, there is a line segment  joining $A(w)$ and $C(w)$ that has midpoint $(x,y)$ and squared length $1/2$, so that $(x,y,w)$ is on the $AC$ cylinder.  This proves that if $A \parallel C$, then the $AC$ cylinder is a flat cylinder.

Next, assume $A$ is not parallel to $C$. 
  Let $M$ be the $2 \times 3$ matrix 
defined by 
$$M=\frac{1}{m_A-m_C} \begin{bmatrix} 
-2(m_A+m_C) & 4 & -2b_A \\
 -4m_Am_C & 2(m_A+m_C) & -2b_Am_C
 \end{bmatrix}.$$ 
Let ${\bf x}=(x,y,w) \in \R^3$, and let $(x_A,y_A) \in A(w)$ and $(x_C,y_C) \in C(w)$. 
Using the fact that $y_A = m_Ax_A+wb_A$ and $y_C = m_Cx_C$, it follows that
 $(x,y)$ is the midpoint of the line segment  joining $(x_A,y_A) $ and $(x_C,y_C)$ if and only if  
$$\begin{bmatrix} 
x_A-x_C \\
y_A-y_C \\
\end{bmatrix} = M
\begin{bmatrix}
x \\
y\\
w \\
\end{bmatrix};$$
If this equation is satisfied, 
then
 $(x_A-x_C)^2+(y_A-y_C)^2 ={\bf x}^TM^TM{\bf x}$, and hence $M^TM$ is a positive semi-definite matrix. 
 Since $M^TM$ is symmetric, the principal axis theorem implies $M^TM =Q^T\Lambda Q$, 
 where $Q$ is 
  a $3 \times 3$ orthogonal matrix    and $\Lambda$ is a $3 \times 3$ diagonal matrix whose diagonal entries $\lambda_1,\lambda_2,\lambda_3$ are the eigenvalues of $M^TM$. 
 Since the determinant of the submatrix of $M$ consisting of the first two columns of $M$ is $-4$,  the rank of $M$ is $2$, and so  $\rank(M^TM)=2$. 
 Thus exactly one of the $\lambda_i$ is $0$, say $\lambda_1=0$. Also, since $M^TM$ is a positive semi-definite  matrix, all of the $\lambda_i$ are non-negative. Without loss of generality, $\lambda_1=0$ and $\lambda_2,\lambda_3>0$. 
For ${\bf x}=(x,y,w) \in \R^3$, let $\overline{\bf x} = Q{\bf x}$ and write 
 $\overline{\bf x} = (\overline{x},\overline{y},\overline{w})$. 
Then  $${\bf x}^TM^TM{\bf x} = {\bf x}^TQ^T\Lambda Q{\bf x}=  \lambda_2 \overline{y}^2+\lambda_3 \overline{w}^2.$$
Thus
$ (x_A-x_C)^2+(y_A-y_C)^2  =   {1}/{2}
$ if and only if 
$\lambda_2 \overline{x}^2+\lambda_3 \overline{w}^2={1}/{2}.$
This last equation defines an elliptical  cylinder in  the coordinates $\overline{x}, {\overline{y}},\overline{w}$, and hence also in the coordinates $x,y,w$ since $Q$ is an orthogonal matrix. 
\end{proof}

\section{Cylinder model}

 The  cylinders and conformal solutions in $\Pi$ as described in the previous sections depend only on the squared lengths $\lambda$ and $\mu$ of the diagonals of the parallelogram $T$. Because of this,  many different configurations  
share the same cylinders and conformal solutions   
  in $\Pi$. For example,  Theorem~\ref{rect cor} implies all degenerate configurations 
 produce the same cylinders and conformal solutions in the coordinates $(u,v,t)$.  
 In this section we discuss a 
different way to represent the configuration $\C$ and its inscribed rectangles, one that depends much more finely on the choice of $\C$.   (See the end of the next section for a more precise statement.) 
This representation, which we call  the cylinder model for $\C$, is 
  based on the same coordinates in which the configuration $\C$ and its inscribed parallelograms reside, but with an additional dimension, a scaling dimension. 
  Whereas a conformal solution in Section 4 consists of a flow of rectangles treated as points through a space of parallelograms also treated as points, in the cylinder model a conformal solution is a flow of rectangles, vertices and all, through a space in which the lines  in the configuration $\C$ 
  reside. 
   Using this representation, we also discuss how the centers of the rectangles can be used to track the rectangles themselves through $\C$, and how the nature of the paths these centers take can be explained  as a  projection from a higher dimension. 
This representation  will also  explain when and why  more than one rectangle can share the same center.

The basic idea is to consider parallelograms inscribed in the following planes in $\R^3$ rather than  in 
lines in $\R^2$. 
 \begin{center} ${\c A}: y=m_Ax+b_Aw$, \: ${\c B}: y=m_B x+w$, \: ${\c C}: y=m_Cx$, \: ${\c D}: y= m_Dx$.  
\end{center} 
 These planes pass through the origin in $\R^3$ and in the coordinates $(x,y,w)$ intersect the plane  $w=1$ in the lines $A,B,C,D$.
See Figure 6.
We say a  (planar) parallelogram $P$ in $\R^3$ is {\it inscribed in $\c A, \c B, \c  C, \c D$}  if  $P$ lies in a plane $w=w_0$ for some $w_0 \in \R$  and  its vertices  lie in sequence on these planes.  
 Writing the vertices of $P$ as   $(x_L,y_L,w) \in \c L$,
where $\c L \in \{\c A, \c B, \c  C, \c D\}$,  the points $(x_L,y_L)$ in $\R^2$ are vertices for a parallelogram inscribed in $\C(w)$, and hence a parallelogram in $\Pi$.  Conversely, every parallelogram   in $\Pi$ arises this way from a parallelogram inscribed in the four planes. 

We refer to the parallelograms inscribed in these four planes as  the {\it cylinder model for ${\bf C}$} because of the role the $AC$ and $BD$ cylinders from Section 5 play in the description of the rectangles for this model. 
In the next section we compare this model  with what we call the cone model for $\C$. 
To describe the rectangles inscribed in $\C$ and at infinity, it is enough, up to scale, to work in the cylinder model and describe the unit rectangles inscribed in the four planes, since these all scale to the rectangles inscribed in $\C$ or at infinity. We describe how to do this next.

\begin{figure}[h] \label{ellipsoid}
 \begin{center}
 \includegraphics[width=0.35\textwidth,scale=.03]{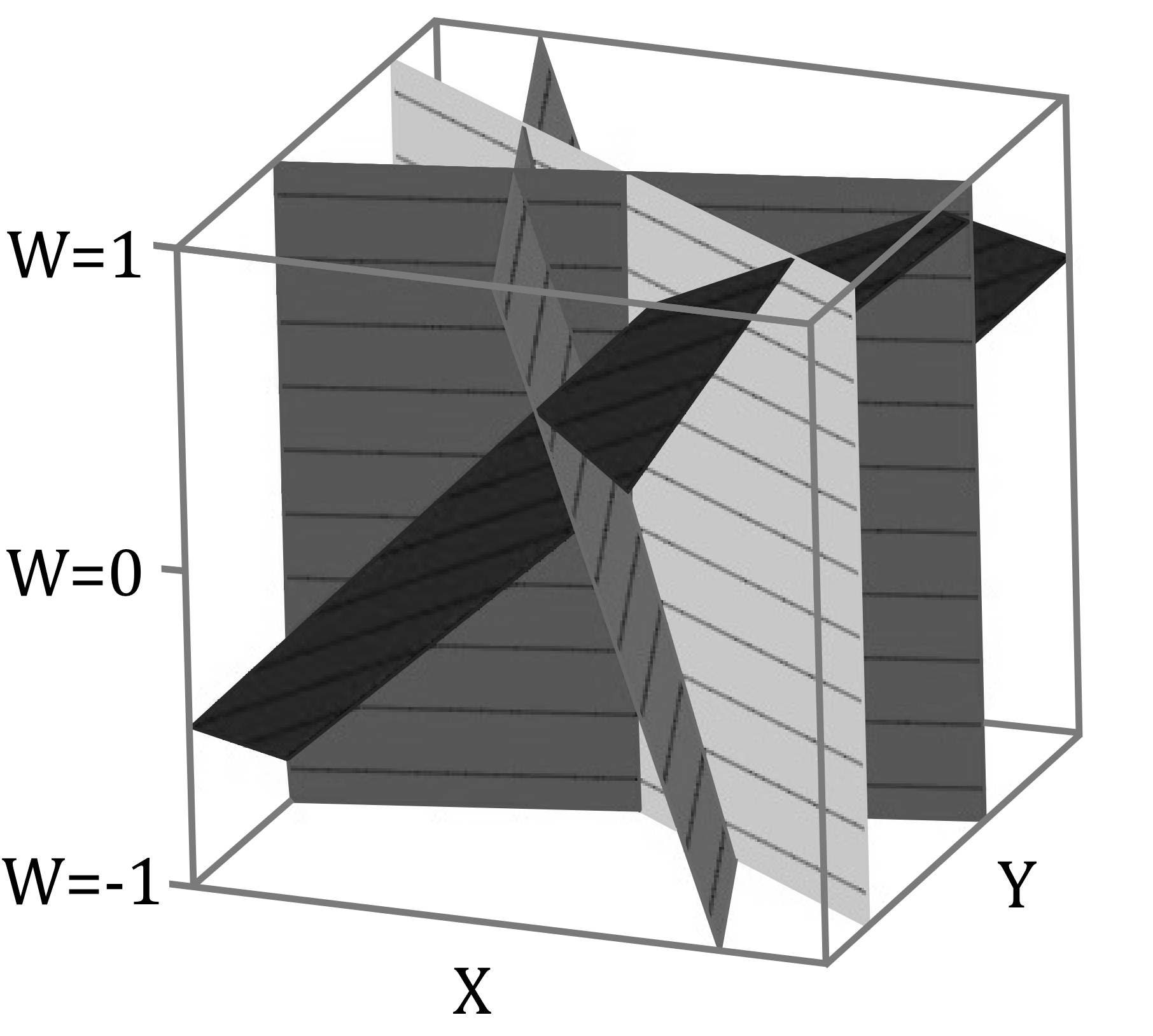} 
 \end{center}
 \caption{The four planes $\c A, \c B, \c C, \c D$.  These planes meet the  plane $w=1$ in the four lines $A,B,C,D$ and meet the plane  $w=0$ in the lines $A(0),B(0),C(0),D(0)$ that give the configuration $\C$ at infinity. The cylinder model for $\C$ consists of the (horizontal) parallelograms inscribed in these planes.
 }
\end{figure}

By the  {\it unit rectangle locus}, we mean the set of centers of the unit rectangles inscribed in $\c A,\c B,\c C, \c D$. 
The unit rectangle locus in the cylinder model is the intersection of the $AC$ and $BD$ cylinders, and thus this locus is an algebraic curve that is the union of two connected components, each the image under the center map from the last section  of a conformal solution for $\C$. 
The unit rectangle locus passes through the centers of the rectangles at infinity in the plane $w=0$. See Figures~8 and~9.

\begin{figure}[h] \label{ellipsoid}
 \begin{center}
 \includegraphics[width=0.95\textwidth,scale=.03]{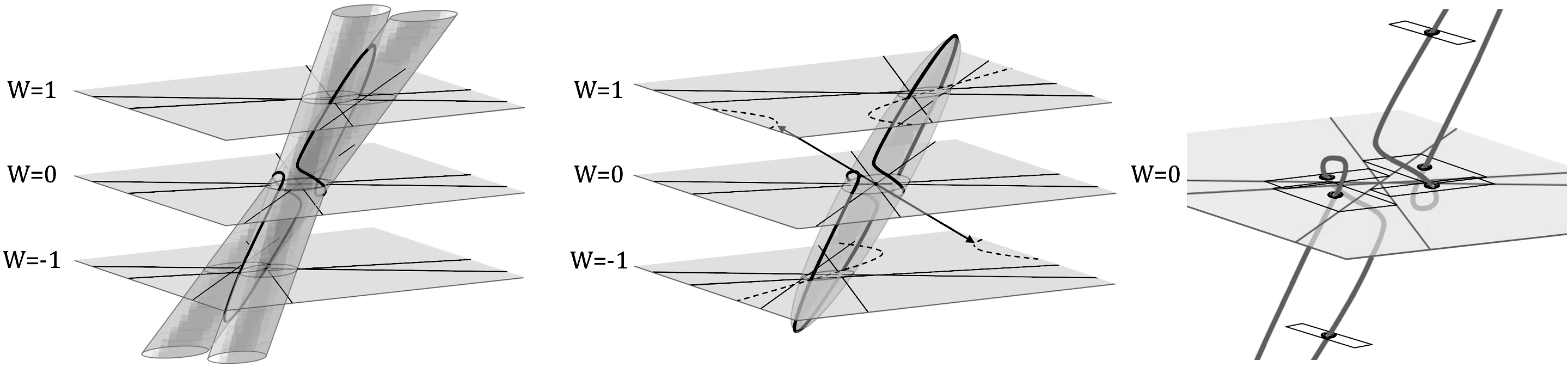} 
 \end{center}
 \caption{The elliptical cylinders at left are the images of the cylinders in Theorem~\ref{cylinders} under the center map from $\Pi$ to $\R^3$.
 The intersection (in black) of the two cylinders is the unit rectangle  locus, which lies on the ellipsoid in the middle figure. This ellipsoid is the set of centers of the unit parallelograms. The plane $w=1$ contains the original four lines that define the configuration, and the
  dotted hyperbola in this plane is the rectangle  
locus for $\C$.   Each point on the unit rectangle locus with $w \ne 0$  projects from the origin to the center of a rectangle inscribed in $\C$, as shown in the middle figure, while those in the plane $w=0$ are the rectangles at infinity for $\C$.  
The unit rectangles travel the unit rectangle locus in the figure at right and project  to the inscribed rectangles in the plane $w=1$. As shown in this same figure, the unit rectangles pass through the four unit rectangles at infinity in the plane $w=0$. Each loop consists of the set of centers of a conformal solution for~$\C$.}
\end{figure}

\begin{figure}[h] \label{ellipsoid}
 \begin{center}
 \includegraphics[width=0.85\textwidth,scale=.03]{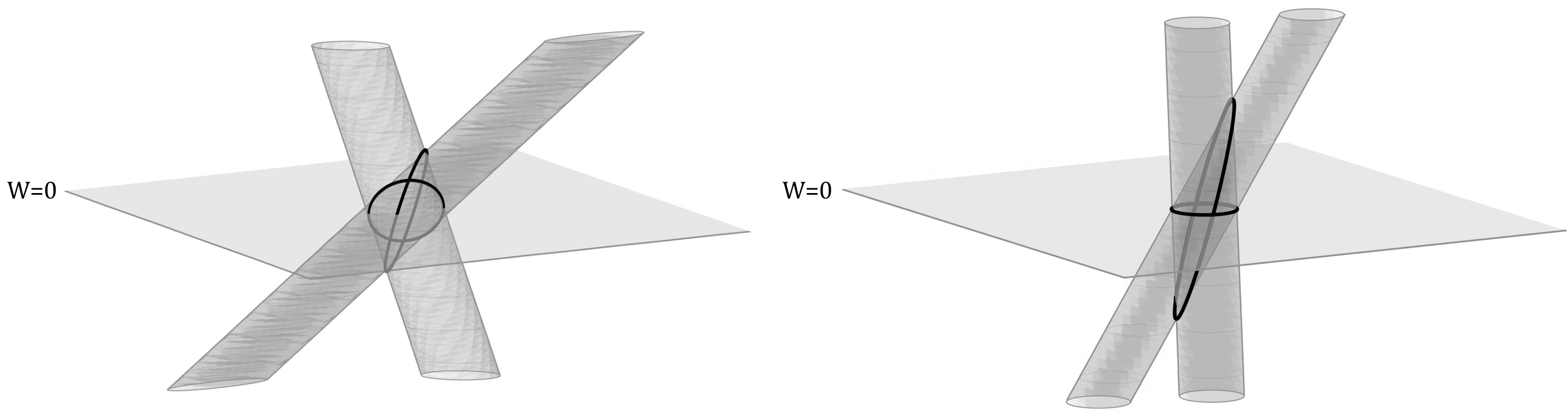} 
 \end{center}
 \caption{The figure at left shows the unit rectangle locus for a degenerate configuration. The two simple closed  smooth curves in the figure meet at the centers of the unit rectangles $T$ and $-T$. A conformal solution traces one of the  curves until reaching $T$, after which it traces the other curve until reaching $-T$, where it again switches to the original  curve.    
The figure at right shows the unit rectangle locus in the case in which $A$ is orthogonal to $C$ and $B$ is orthogonal to $D$.  This also represents a degenerate configuration. A conformal solution in this case traces out a path that travels the vertical loop for $w \geq 0$ and, upon reaching the plane $w=0$, travels half the loop in $w=0$ then remerges again on the vertical loop with $w \geq 0$. The other conformal solution traces the path that is the reflection through the origin of the first path.}
\end{figure}

Off the plane $w =0$ in the cylinder model, the unit rectangle locus projects  to the {\it rectangle locus} in the plane $w=1$,  the curve   traced by the centers of the rectangles inscribed in $\C$.
This locus in the plane has been studied in 
  \cite{OW,OW2,Sch,Sch2,Tup}.
If some of the pairs of lines among $A,B,C,D$ are parallel or perpendicular, it can happen that the rectangle locus is a point, line or a line with a missing segment; see \cite[Section~4]{OW} for a more precise statement. In \cite[Section~9]{OW2}, these   behaviors are explained by showing that
 if the locus is not itself a hyperbola, it is a ``flat hyperbola,'' that is, 
the image of a  hyperbola  under a rank one affine transformation. 
See Figure~10 for another explanation of this behavior. 

 Otherwise, 
    if none of the lines $A,B,C,D$ are parallel or perpendicular to each other, the rectangle locus is a hyperbola that is degenerate if and only if the configuration $\C$ is degenerate; if and only if the diagonals of $\C$ are perpendicular; see \cite[Theorem~7.1]{OW2} and \cite[Theorem 3.3]{Sch}. In this case, if $\C$ is degenerate,    the rectangle locus is a pair of distinct and intersecting lines, and  the parallelogram $T$ from Theorem~\ref{ortho}
 is a rectangle by Theorem~\ref{rect cor}. The proof of Corollary~\ref{equiv} implies this rectangle scales to the 
 unique rectangle\footnote{The slope of this rectangle is the same as  the slope of one of the rectangles at infinity and the aspect ratio of the rectangle is the aspect ratio of a different rectangle at infinity; see \cite[Section~7]{OW2}.}  inscribed in $\C$ whose center 
is the intersection of the two lines.

The second example in Figure 9 is a good illustration of how the cylinder model clarifies the behavior of the rectangle locus for $\C$.  In this case, $A \perp C$ and $B \perp D$, and the rectangle locus is a line in $\R^2$. As discussed in \cite[Corollary 7.3]{OW2}, the line at infinity for $\R^2$ gives another line of degenerate rectangles, all at infinity. Figure 9 shows how in the cylinder model, this exceptional case is not so exceptional after all: the unit rectangle locus behaves as the rectangle locus in a generic degenerate configuration, only with one of the simple closed curves lying in the plane $w=0$.   This phenomenon of having infinitely many rectangles inscribed at infinity is characterized in \cite[Theorem~7.3]{OW2}, where this is shown to occur if and only if $\C$ consists of ``twin pairs;'' see \cite[Definition~3.3]{OW2} for the definition.

It is possible that more than one rectangle in $\Pi$ shares the same center in the cylinder model. By Proposition~\ref{invertible}, this only happens if at least one of the pairs $A,C$ or $B,D$  consists of parallel lines. If $A$ and $C$ are parallel, then the $AC$ cylinder is flat by Proposition~\ref{old}. Proposition~\ref{invertible} suggests viewing the  $AC$ cylinder in this case as ``two points thick'' everywhere except the boundary of this flat cylinder, where the cylinder is ``one point thick.'' To determine how many rectangles inscribed in $\C$ share a particular center $p$ is then simply a matter of taking a point $p'$ on the unit rectangle locus that projects to $p$ and 
multiplying the thickness  at $p'$  
of one cylinder by the thickness of the other cylinder at $p'$.  
It follows that  at most four rectangles inscribed in $\C$ share the same center, and this only if $A$ is parallel to $C$ and $B$ is parallel to $D$.

\begin{figure}[h] \label{ellipsoid}
 \begin{center}
 \includegraphics[width=0.85\textwidth,scale=.03]{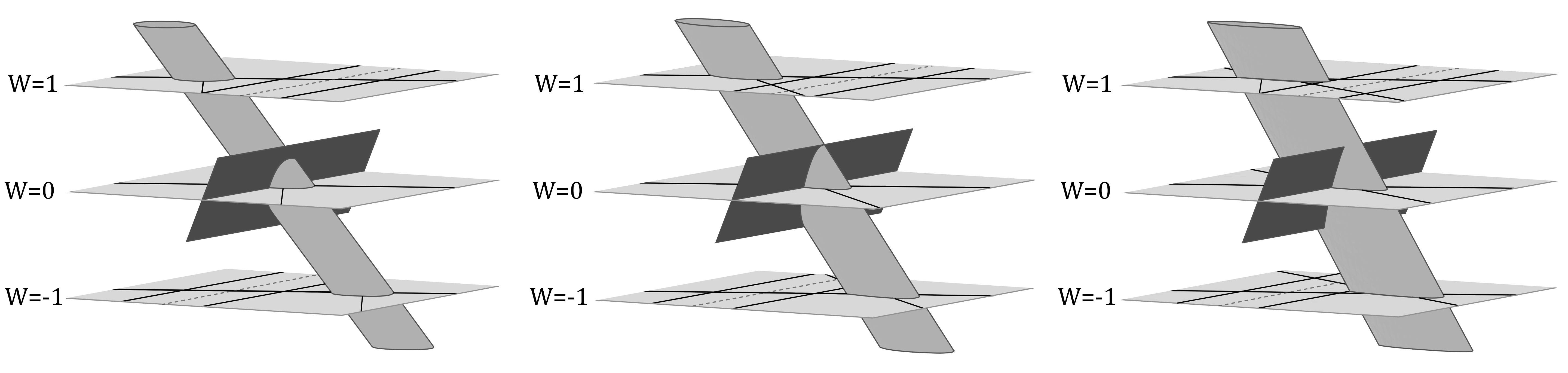} 
 \end{center}
 \caption{
Each of these figures represents a slightly different configuration in which the lines $A$ and $C$ are parallel and remain fixed. The dark plank is the $AC$ cylinder, which in this case is flat. The elliptical cylinder is the $BD$ cylinder, which is a different cylinder in each image since the lines $B,D$ are different for each image. The unit rectangle locus is the intersection of these two cylinders.  
 The figures at left and right are nondegenerate configurations. In the first figure, the unit rectangle locus is a planar ellipse while in the third figure it is a piece of a planar ellipse. In the former, the rectangle locus in the plane $w=1$ is a line, while in the third, a line with a missing segment. The middle figure is a degenerate configuration, which is evidenced by the fact that the boundaries of the two cylinders intersect.
 }
\end{figure}


\section{Cone model}

Recall from the last section that the rectangle locus for the configuration $\C$ is the set of centers in $\R^2$ of the rectangles inscribed in $\C$.  
In \cite[Lemma 4.2]{OW}, the rectangle locus for  $\C$ is described  as the projection to the plane of the intersection of a pair of surfaces in $\R^3$ associated to the pairs $A,C$ and $B,D$. Each of these surfaces is a cone, in an expanded sense of the word by which we mean either   a real elliptical cone or a {\it flat cone}:  a vertical plane in $\R^3$, possibly with a missing midsection (see \cite[Section 3]{OW}). 
 The former occurs if the lines in the pair are not parallel and the latter if they are. 
 
 We call this
 setting, which consists of $\R^3$, these cones and their intersections,  the {\it cone model} for~$\Pi$. 
In this section we  show how the cone and cylinder models can be obtained from each other by viewing each model as a subspace of $\P^3$. In fact, the difference between these two models is simply a matter of where the plane at infinity in $\P^3$ is placed. 

We denote the homogeneous coordinates in $\P^3$ by  $[x:y:z:w]$, and we use $(x,y,z)$ to designate the points in the cone model and  $(x,y,w)$   the points in the cylinder model. 
The cone model resides in the affine chart $w \ne 0$ of $\P^3$
 and the cylinder model   in the affine chart $z\ne 0$.  
The affine plane in $\P^3$ defined by $z=w=1$ contains the original configuration $\C$ of four lines and the parallelograms and rectangles inscribed in $\C$. The line at infinity for this plane is the line in $\P^3$ defined by $z=w=0$. Put another way:
  {\it The
plane $w=0$ in the cylinder model is the plane at infinity for the cone model, and the plane $z=0$ in the cone model is the plane at infinity for the cylinder model.}

This observation makes  heuristic arguments possible. For example, a cone in the cone model with apex in the plane $z=0$ becomes a cylinder in the cylinder model, since a cylinder is a cone with its apex at infinity; see Figure~11. Similarly, the ellipsoid centered at the origin in the cylinder model becomes a hyperboloid in the cone model.

\begin{figure}[h] \label{ellipsoid}
 \begin{center}
 \includegraphics[width=0.9\textwidth,scale=.03]{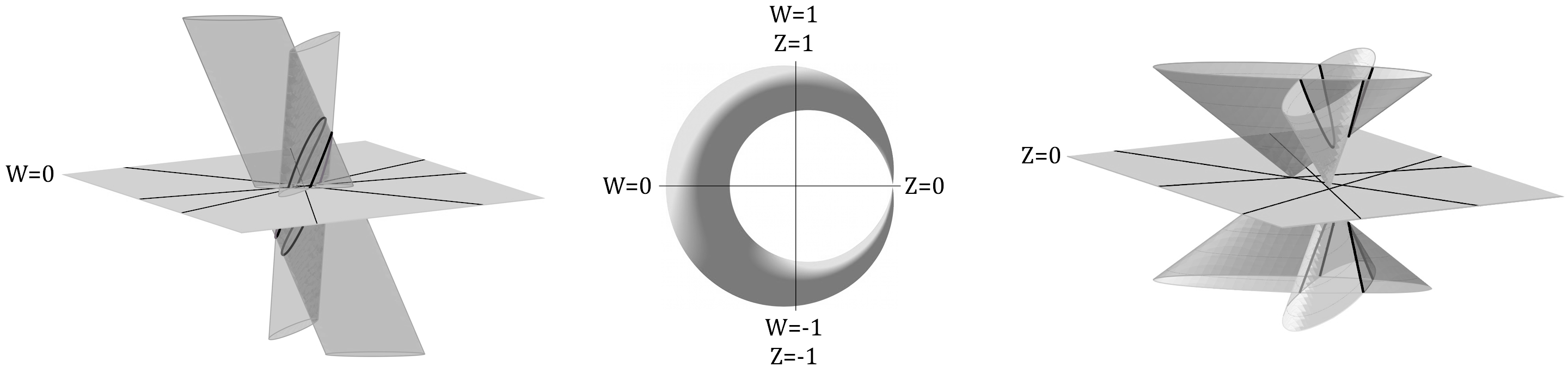} 
 \end{center}
 \caption{The cylinders in the figure at left become the cones at right under the perspective transformation that exchanges the plane $w=0$ with the plane $z=0$.  
The figure in the middle is a conceptual diagram that represents a projective object that
is a cone when $w=0$ is the plane at infinity and a cylinder when $z=0$ is the plane at infinity. 
  The original configuration of four lines resides in the plane in $\P^3$ defined by $z=w=1$. The perspective transformation leaves  this plane untouched, as it does also the plane defined by $z=w=-1$, which contains the reflection through the origin of the original  configuration.
  }
\end{figure}

For the sake of completeness, we   give algebraic arguments to indicate a few of the connections between the two models. 
 This is a matter of verifying that the projective closures of the various surfaces and curves in  the  cone model yield the corresponding surfaces and curves in the cylinder model. 
 
First, we say  that in the cone model, the {\it AC cone}  consists of the points  $(x,y,z)$ in $\R^3$  such that $(x,y)$ is the midpoint of a segment   whose endpoints lie on $A$ and $C$ and $2|z|$ is its length.
Suppose first that $A$ and $C$ are not parallel. 
It is shown in \cite[Theorem 3.3]{OW} that in this case,  the $AC$ cone is a real elliptical cone whose equation has the form
 $$(a_1x+a_2y+a_3)^2 +(b_1x+b_2y+b_3)^2=z^2,$$ 
where $a_i,b_j \in \R$.  This cone has central axis parallel to the $z$-axis and its apex is in the plane $z=0$.  The homogenization of the equation of this cone is $$(a_1x+a_2y+a_3w)^2 +(b_1x+b_2y+b_3w)^2=z^2.$$ In the chart $z\ne 0$,  
we can rewrite this  as  $$2(a'_1x+a'_2y+a'_3w)^2 +2\lambda(b'_1x+b'_2y+b'_3w)^2=1,$$ where $a'_i,b'_j \in \R$ and $\lambda$ is as in Notation~\ref{lambda}. (Since $A$ and $C$ are not parallel, $\lambda \ne 0$.)
  This is the equation of the $AC$ cylinder  in the cylinder model.   
Thus, taking the projective closure of the $AC$ cone in the chart $w\ne 0$ of $\P^3$ produces a conical surface whose apex is in the plane $z=0$. In the chart $z \ne 0$
this is the $AC$ cylinder, which can be viewed as a cone having apex at infinity (the plane $z=0$ in this case).    


Otherwise, if $A$ and $C$ are parallel, then the $AC$ cone is a flat cone, a vertical plane with a missing midsection. Thus there are $a,b,c,d \in \R$, $d\geq 0$, such that the surface is defined by   $ax+by+c=0$ and $ |z| \geq d$. 
 (See \cite[Proposition~3.8]{OW} for more details.) Taking the projective closure of this surface and  working in the chart $z\ne 0$ yields the flat cylinder from the last section, which is defined by $ax+by+cw=0$ and $dw \leq 1$.

Next, we compare how the rectangles inscribed in $\C$ are handled in each model. 
 In the cylinder model,  the set of unit  parallelograms is   an ellipsoid (possibly flat) that is symmetric about the origin, which in the cone model becomes the hyperboloid consisting  of  
 points $(x,y,z)$ such that $(x,y)$ is the center of a parallelogram in $\Pi$ with norm $|z|$.  
 On this   hyperboloid lies the curve consisting of the points $(x,y,z)$ such that $(x,y)$ is the center of a rectangle of norm $|z|$.  This curve consists of two branches for $z>0$ and two branches for $z<0$.  Since the plane at infinity for the cone model is $w=0$, it follows that in the cylinder model these branches become the pair of closed curves on the ellipsoid that define the unit rectangle locus. In the cylinder model, both the rectangles inscribed in $\C$ and at infinity appear, while in the cone model the rectangles at infinity are absent. 
 
 Finally, we note that in almost all cases, the pairs of 
lines $A,C$ and $B,D$ that define  
  the cones and cylinders in these two models can in turn be read off from the shapes of the cones and cylinders themselves, thus allowing either model to stand in place of the original rectangle inscription problem. To explain this, we first recall what distinguishes $AC$ and $BD$ cones among elliptical cones. Namely, the cones that are not flat and arise as $AC$ or $BD$ cones are {\it hyperbolically rotated}  from circular cones (more precisely, from ``unit cones''); see \cite[Section 2]{OW}.  Such cones, when elliptical with  vertical central axis and apex at $z=0$, 
  are distinguished by the   fact that the elliptical cross section of the cone at $z=1$ 
 has area $\pi$. 
 Since the $AC$ and $BD$ cylinders are the result of a perspective transformation of the $AC$ and $BD$ cones that leaves intact the affine  plane in $\P^3$ defined by $z=1$ and $w=1$, the $AC$ and $BD$ are cylinders are hyperbolic rotations of circular cylinders, but more simply, these are the cylinders whose horizontal elliptical cross sections have area $\pi$. 

Conversely, given a hyperbolically rotated 
 cone,  {\it one that is not a circular cone}, with vertical central axis and apex in the plane $z=0$, it's possible to 
find the lines $A $ and $C$ that gave rise to this cone. This is done 
 by intersecting the $AC$ cone with a circular cone sharing the same apex; see \cite[Theorem~3.6]{OW} for details on  how to do this. 
 Since the $AC$ cylinder is the $AC$ cone under the perspective transformation discussed above, it follows that the $AC$ cylinder also uniquely determines the lines $A$ and $C$. 
 The analogous statement holds for $B$ and $D$.  
 
 But what if one of the cones  is circular? In this case, the cone, and hence the corresponding cylinder,  is generated by any pair of perpendicular lines in the $xy$-plane  that intersect at the apex of the cone, and conversely, any pair of perpendicular lines gives rise to a circular cone (see \cite[Theorem~3.6]{OW}) and hence a cylinder whose horizontal cross sections are circles. 
 While this 
 might appear to be a shortcoming in the representation of the configuration $\C$ in our three-dimensional models,  it actually expresses something interesting about the case in which one of the pairs, say $A,C$, consist of perpendicular lines. In this case, any rotation of the pair $A,C$ results in the same $AC$ cone and hence the same rectangle locus.  
 Moreover, as the orthogonal pair $A,C$ rotates, the rectangles on this fixed  locus change shape as the lines rotate, but these rectangles never change size in the sense that their norm, as defined in Section 3, does not change.

 \medskip
 {\it Conflict of interest statement}. 
 On behalf of all authors, the corresponding author states that there is no conflict of interest.

\end{document}